\numberwithin{equation}{section}
\theoremstyle{plain}
\newtheorem{thm}{Theorem}[section]
\newtheorem{prop}[thm]{Proposition}
\newtheorem{cor}[thm]{Corollary}
\newtheorem{rem}[thm]{Remark}
\def\R{\mathcal R}
\def\G{\mathbb{G}}
\def\S{\mathfrak{S}}
\begin{document}

 \title[Reverse inequalities  on homogeneous groups]
{Reverse Stein-Weiss,  Hardy-Littlewood-Sobolev, Hardy, Sobolev and Caffarelli-Kohn-Nirenberg inequalities  on homogeneous groups}

\author{Aidyn Kassymov}
\address{
  Aidyn Kassymov:
  \endgraf
   \endgraf
  Department of Mathematics: Analysis, Logic and Discrete Mathematics
  \endgraf
  Ghent University, Belgium
  \endgraf
 	 and
  \endgraf
  Al-Farabi Kazakh National University
  \endgraf
   71 Al-Farabi avenue
   \endgraf
   050040 Almaty
   \endgraf
   Kazakhstan
  \endgraf
	and
\endgraf
  Institute of Mathematics and Mathematical Modeling
  \endgraf
  125 Pushkin str.
  \endgraf
  050010 Almaty
  \endgraf
  Kazakhstan
  \endgraf
  {\it E-mail address} {\rm aidyn.kassymov@ugent.be} and {\rm kassymov@math.kz}}

  \author[M. Ruzhansky]{Michael Ruzhansky}
\address{
	Michael Ruzhansky:
	 \endgraf
  Department of Mathematics: Analysis, Logic and Discrete Mathematics
  \endgraf
  Ghent University, Belgium
  \endgraf
  and
  \endgraf
  School of Mathematical Sciences
    \endgraf
    Queen Mary University of London
  \endgraf
  United Kingdom
	\endgraf
  {\it E-mail address} {\rm michael.ruzhansky@ugent.be}
}

\author[D. Suragan]{Durvudkhan Suragan}
\address{
	Durvudkhan Suragan:
	\endgraf
	Department of Mathematics, Nazarbayev University
	\endgraf
	53 Kabanbay Batyr Ave, Nur-Sultan 010000
	\endgraf
	Kazakhstan
	\endgraf
	{\it E-mail address} {\rm durvudkhan.suragan@nu.edu.kz}}

\thanks{
The authors were supported in parts by the FWO Odysseus Project, the Leverhulme Grant RPG-2017-151 and by EPSRC Grant EP/R003025/1, as well as NU CRG 091019CRP2120 and NU FDCRG 240919FD3901.
\
}

    \keywords{Riesz potential, fractional operator, reverse Hardy-Littlewood-Sobolev inequality, reverse Stein-Weiss inequality,  reverse Hardy inequality, reverse Sobolev inequality, reverse Caffarelli-Kohn-Nirenberg inequality,
     	 homogeneous Lie group.}
 \subjclass{22E30, 43A80.}

     \begin{abstract}
In this note we prove the reverse Stein-Weiss inequality on general homogeneous Lie groups. The obtained results extend previously known inequalities. Special properties of homogeneous norms and the reverse integral Hardy inequality play  key roles in our proofs. Also, we show reverse Hardy, Hardy-Littlewood-Sobolev, $L^{p}$-Sobolev and $L^{p}$-Caffarelli-Kohn-Nirenberg inequalities on homogeneous groups. 
     \end{abstract}
     \maketitle

\section{Introduction}

In one of their pioneering work \cite{HL28}, Hardy and Littlewood considered the one dimensional fractional integral operator on $(0,\infty)$ given by
\begin{equation}\label{1Doper}
T_{\lambda}u(x)=\int_{0}^{\infty}\frac{u(y)}{|x-y|^{\lambda}}dy,\,\,\,\,0<\lambda<1,
\end{equation}
and proved the following theorem:
\begin{thm}\label{1DHLS28}
Let $1<p<q<\infty$ and $u\in L^{p}(0,\infty)$  with $\frac{1}{q}=\frac{1}{p}+\lambda-1$. Then
\begin{equation}
\|T_{\lambda}u\|_{L^{q}(0,\infty)}\leq C \|u\|_{L^{p}(0,\infty)},
\end{equation}
where $C$ is a positive constant independent of $u$.
\end{thm}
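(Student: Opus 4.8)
The plan is to treat $T_{\lambda}$ as (essentially) a convolution with the homogeneous kernel $k(x)=|x|^{-\lambda}$ and to run the standard real-variable argument built around the Hardy--Littlewood maximal function. First I would extend $u$ by zero from $(0,\infty)$ to $\mathbb{R}$; then $T_{\lambda}u(x)=(k*u)(x)$ for every $x$, so $|T_{\lambda}u(x)|\le (k*|u|)(x)$ and $\|T_{\lambda}u\|_{L^{q}(0,\infty)}\le \|k*|u|\|_{L^{q}(\mathbb{R})}$. It therefore suffices to prove $\|k*f\|_{L^{q}(\mathbb{R})}\le C\|f\|_{L^{p}(\mathbb{R})}$ for $f\ge 0$. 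Since $k$ is homogeneous of degree $-\lambda$, testing on dilates $f(\delta\,\cdot)$ and comparing the scaling of the two sides shows that such a bound can only hold when $\tfrac1q=\tfrac1p+\lambda-1$; this is the reason for the stated relation between the exponents, and it is exactly what makes the balancing below succeed.

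For the estimate itself, fix $x$ and a radius $R>0$ and split $k*f(x)=I_{1}+I_{2}$ according to $|x-y|<R$ and $|x-y|\ge R$. For the local part $I_{1}$ I would decompose the interval $\{|x-y|<R\}$ into dyadic annuli $\{2^{-j-1}R\le|x-y|<2^{-j}R\}$; on each annulus $|x-y|^{-\lambda}\le (2^{-j-1}R)^{-\lambda}$ and the mass of $f$ over the enclosed interval is controlled by $(2^{-j}R)\,Mf(x)$, yielding a geometric series $\sum_{j\ge 0}(2^{-j}R)^{1-\lambda}$ that converges precisely because $\lambda<1$; this gives $I_{1}\le C\,R^{1-\lambda}Mf(x)$. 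For the tail $I_{2}$, Hölder's inequality gives $I_{2}\le \|f\|_{L^{p}}\bigl(\int_{|t|\ge R}|t|^{-\lambda p'}\,dt\bigr)^{1/p'}$, and the tail integral converges exactly when $\lambda p'>1$, i.e.\ when $\tfrac1q>0$; evaluating it produces $I_{2}\le C\,\|f\|_{L^{p}}R^{-1/q}$ after using $\tfrac1{p'}-\lambda=-\tfrac1q$.

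Finally I would optimize in $R$ by balancing the two terms, choosing $R$ with $R^{1/p}=\|f\|_{L^{p}}/Mf(x)$; substituting back and using $1-p(1-\lambda)=p/q$ yields the pointwise bound $k*f(x)\le C\,\|f\|_{L^{p}}^{\,p(1-\lambda)}\,Mf(x)^{p/q}$. Raising to the power $q$, integrating over $\mathbb{R}$, and invoking the Hardy--Littlewood maximal theorem $\|Mf\|_{L^{p}}\le C_{p}\|f\|_{L^{p}}$ (valid since $p>1$) then gives $\|k*f\|_{L^{q}}^{q}\le C\,\|f\|_{L^{p}}^{\,qp(1-\lambda)}\|Mf\|_{L^{p}}^{p}\le C\,\|f\|_{L^{p}}^{\,q}$, where the exponents add up to $q$ by the scaling relation. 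I expect the main obstacle to be purely the boundedness of the maximal operator on $L^{p}$ for $p>1$: everything else is dictated by homogeneity and elementary estimates. The hypothesis $1<p$ is essential here, since at $p=1$ the operator is only of weak type and one must instead interpolate, via Marcinkiewicz, between weak-type endpoint bounds; equivalently, one may obtain the whole range at once from Young's inequality for weak Lebesgue spaces, using $k\in L^{1/\lambda,\infty}(\mathbb{R})$.
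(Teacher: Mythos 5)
Your proof is correct, but there is nothing in the paper to compare it against: Theorem \ref{1DHLS28} is not proved in this paper at all. It is quoted verbatim as the classical 1928 result of Hardy and Littlewood, with a citation to \cite{HL28}, and appears in the introduction purely as historical motivation for the \emph{reverse} inequalities that are the paper's actual subject (where the relevant machinery is the reverse H\"older and reverse integral Hardy inequalities, not maximal functions). Taken on its own terms, your argument is the standard modern route via Hedberg's pointwise inequality, and all the exponent arithmetic checks out: the dilation computation forces $\tfrac{1}{q}=\tfrac{1}{p}+\lambda-1$; the dyadic estimate $I_{1}\le C R^{1-\lambda}Mf(x)$ needs only $\lambda<1$; the tail estimate $I_{2}\le C\|f\|_{L^{p}}R^{-1/q}$ needs $\lambda p'>1$, which is precisely $\tfrac{1}{q}>0$; the balancing choice $R^{1/p}=\|f\|_{L^{p}}/Mf(x)$ gives $(k*f)(x)\le C\|f\|_{L^{p}}^{p(1-\lambda)}\bigl(Mf(x)\bigr)^{p/q}$ since $1-p(1-\lambda)=p/q$; and the exponents $qp(1-\lambda)+p=q$ do add up. The hypothesis $1<p$ enters exactly where you say, through the strong $(p,p)$ bound for the maximal operator. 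The only loose ends are routine: one should dispose of the degenerate cases $Mf(x)=0$ or $Mf(x)=\infty$ before optimizing in $R$ (the former occurs only if $f\equiv 0$, the latter only on a null set), and note that $f\ge 0$ may be assumed by replacing $u$ with $|u|$. This is a legitimate, self-contained proof of the cited classical theorem --- just be aware that it is independent of, and plays no role in, the arguments of the paper itself.
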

The $N$-dimensional analogue of \eqref{1Doper} can be written by the formula:
\begin{equation}\label{NDoper}
I_{\lambda}u(x)=\int_{\mathbb{R}^{N}}\frac{u(y)}{|x-y|^{\lambda}}dy,\,\,\,\,0<\lambda<N.
\end{equation}
The $N$-dimensional case of  Theorem \ref{1DHLS28} was extended by Sobolev in
\cite{Sob38}:
\begin{thm}\label{THM:HLS}
Let $1<p<q<\infty$, $u\in L^{p}(\mathbb{R}^{N})$  with $\frac{1}{q}=\frac{1}{p}+\frac{\lambda}{N}-1$. Then
\begin{equation}
\|I_{\lambda}u\|_{L^{q}(\mathbb{R}^{N})}\leq C \|u\|_{L^{p}(\mathbb{R}^{N})},
\end{equation}
where $C$ is a positive constant independent of $u$.
\end{thm}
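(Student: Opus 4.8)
The plan is to prove the inequality through a pointwise ``Hedberg'' bound that dominates $I_\lambda u(x)$ by an interpolation between the Hardy--Littlewood maximal function of $u$ and the fixed number $\|u\|_{L^p}$, followed by the $L^p$-boundedness of the maximal operator. Throughout write $Mu(x)=\sup_{r>0}\frac{1}{|B(x,r)|}\int_{B(x,r)}|u(y)|\,dy$ for the centered maximal function, and observe that the hypothesis $\frac1q=\frac1p+\frac\lambda N-1>0$ forces $\lambda p'>N$, where $p'=p/(p-1)$; this integrability fact at infinity will be used in a moment.

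First I would fix $x$ and a radius $R>0$ and split $I_\lambda u(x)=\int_{|x-y|<R}+\int_{|x-y|\ge R}=:I_1+I_2$. For the near part $I_1$, decomposing into dyadic annuli $2^{-k-1}R\le|x-y|<2^{-k}R$ and bounding each local average by $Mu(x)$ gives, since $N-\lambda>0$ makes the resulting geometric series converge, the estimate $|I_1|\le CR^{N-\lambda}Mu(x)$. For the far part $I_2$, H\"older's inequality with exponents $p,p'$ yields $|I_2|\le\|u\|_{L^p}\big(\int_{|z|\ge R}|z|^{-\lambda p'}\,dz\big)^{1/p'}$, and here the condition $\lambda p'>N$ guarantees convergence and produces $|I_2|\le CR^{N/p'-\lambda}\|u\|_{L^p}$.

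Next I would optimize in $R$. Balancing the two powers of $R$ by choosing $R=(\|u\|_{L^p}/Mu(x))^{p/N}$ collapses the estimate into the pointwise inequality $|I_\lambda u(x)|\le C\,(Mu(x))^{1-p/q}\,\|u\|_{L^p}^{\,p/q}$, where the exponents match exactly because of the scaling relation $\frac1q=\frac1p+\frac\lambda N-1$ (this is the only place the sharp balance of exponents is used). Raising to the $q$th power, integrating in $x$, and pulling out the factor $\|u\|_{L^p}^{q-p}$ reduces everything to controlling $\int(Mu)^p$. The final step invokes the Hardy--Littlewood maximal theorem $\|Mu\|_{L^p}\le C_p\|u\|_{L^p}$, valid precisely because $p>1$, giving $\|I_\lambda u\|_{L^q}^q\le C\|u\|_{L^p}^{q-p}\|u\|_{L^p}^p=C\|u\|_{L^p}^q$.

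The main obstacle---and the only genuinely nontrivial analytic input---is the $L^p$-boundedness of the maximal operator; everything else is bookkeeping of exponents. It is worth stressing where the strict inequalities $1<p<q<\infty$ enter: $p>1$ is needed for the maximal theorem (the endpoint $p=1$ fails, yielding only a weak-type bound), while $q<\infty$ together with $\lambda p'>N$ is exactly what makes the two truncated integrals converge. An alternative, more abstract route would note that $I_\lambda u=u*|\cdot|^{-\lambda}$ with the kernel lying in the weak space $L^{N/\lambda,\infty}$, and then appeal to the weak Young convolution inequality obtained from Marcinkiewicz interpolation; I prefer the Hedberg argument here since it is self-contained and transfers directly to the homogeneous-group setting considered later in the paper.
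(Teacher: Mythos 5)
You should first be aware that the paper offers no proof of this statement: Theorem \ref{THM:HLS} is the classical Hardy--Littlewood--Sobolev/Sobolev theorem, quoted in the introduction with a citation to \cite{Sob38} purely as background for the reverse inequalities that are the paper's actual subject. So there is no proof in the paper to compare against; what can be said is that your Hedberg-type argument is a correct and entirely standard route to this classical result (and, as you note, one that transfers well to homogeneous groups, where maximal function techniques remain available). The splitting into $|x-y|<R$ and $|x-y|\ge R$, the dyadic-annulus bound $|I_1|\le C R^{N-\lambda} Mu(x)$ using $N-\lambda>0$, the H\"older bound $|I_2|\le C R^{N/p'-\lambda}\|u\|_{L^{p}}$ using $\lambda p'>N$ (which does follow from $\tfrac1q>0$), and the final appeal to the Hardy--Littlewood maximal theorem for $p>1$ are all in order.

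One slip should be corrected: after optimizing in $R$ you state the pointwise bound with the exponents on $Mu(x)$ and $\|u\|_{L^{p}}$ transposed. Balancing $R^{N-\lambda}Mu(x)$ against $R^{N/p'-\lambda}\|u\|_{L^{p}}$ at $R=\bigl(\|u\|_{L^{p}}/Mu(x)\bigr)^{p/N}$ yields $|I_{\lambda}u(x)|\le C\,(Mu(x))^{p/q}\,\|u\|_{L^{p}}^{1-p/q}$, since the scaling relation gives $p(N-\lambda)/N=1-\tfrac{p}{q}$. Your very next sentence --- raising to the power $q$, pulling out $\|u\|_{L^{p}}^{q-p}$, and being left with $\int (Mu)^{p}$ --- is consistent only with this corrected form, so the error is evidently typographical rather than structural; with the inequality as you literally wrote it you would instead need to control $\int (Mu)^{q-p}$, which is not what the maximal theorem provides. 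With the exponents fixed, the proof is complete.
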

Later, in \cite{StWe58} Stein and Weiss obtained the following two-weight extention of the  Hardy-Littlewood-Sobolev inequality, which is known as the Stein-Weiss inequality or weighted Hardy-Littlewood-Sobolev inequality.
\begin{thm}\label{Classiacal_Stein-Weiss_inequality}
Let $0<\lambda<N$, $1<p<\infty$, $\alpha<\frac{N(p-1)}{p}$, $\beta<\frac{N}{q}$, $\alpha+\beta\geq0$ and $\frac{1}{q}=\frac{1}{p}+\frac{\lambda+\alpha+\beta}{N}-1$. If $1<p\leq q<\infty$, then
\begin{equation}
\||x|^{-\beta}I_{\lambda}u\|_{L^{q}(\mathbb{R}^{N})}\leq C \||x|^{\alpha}u\|_{L^{p}(\mathbb{R}^{N})},
\end{equation}
where $C$ is a positive constant independent of $u$.
\end{thm}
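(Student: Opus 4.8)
The plan is to prove the two-weight estimate by substituting $f=|x|^{\alpha}u$ and establishing the $L^{p}\to L^{q}$ boundedness of the kernel operator
\[
Tf(x)=|x|^{-\beta}\int_{\mathbb{R}^{N}}\frac{|y|^{-\alpha}f(y)}{|x-y|^{\lambda}}\,dy,
\]
since $\||x|^{-\beta}I_{\lambda}u\|_{L^{q}(\mathbb{R}^{N})}=\|Tf\|_{L^{q}(\mathbb{R}^{N})}$ and $\||x|^{\alpha}u\|_{L^{p}(\mathbb{R}^{N})}=\|f\|_{L^{p}(\mathbb{R}^{N})}$. Assuming without loss of generality that $f\geq0$, I would split the $y$-integration into the three scaling regions
\[
D_{1}=\{|y|<|x|/2\},\qquad D_{2}=\{|y|>2|x|\},\qquad D_{3}=\{|x|/2\leq|y|\leq2|x|\},
\]
and write $T=T_{1}+T_{2}+T_{3}$ accordingly. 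The purpose of this decomposition is that on each region the three homogeneous factors $|x|^{-\beta}$, $|x-y|^{-\lambda}$, $|y|^{-\alpha}$ simplify: on $D_{1}$ one has $|x-y|\approx|x|$, on $D_{2}$ one has $|x-y|\approx|y|$, and on $D_{3}$ one has $|x|\approx|y|$.

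The diagonal piece $T_{3}$ is where I would invoke the Hardy--Littlewood--Sobolev inequality. On $D_{3}$ the two weights are comparable, $|x|^{-\beta}|y|^{-\alpha}\approx|x|^{-(\alpha+\beta)}$, and since $|x-y|\leq|x|+|y|\lesssim|x|$ together with the hypothesis $\alpha+\beta\geq0$ gives $|x|^{-(\alpha+\beta)}\lesssim|x-y|^{-(\alpha+\beta)}$, the full kernel on $D_{3}$ is dominated by $|x-y|^{-(\lambda+\alpha+\beta)}$. Hence $T_{3}f(x)\lesssim I_{\lambda+\alpha+\beta}f(x)$ pointwise, and because the balance relation $\frac{1}{q}=\frac{1}{p}+\frac{\lambda+\alpha+\beta}{N}-1$ is exactly the Sobolev exponent condition for the Riesz potential of order $\lambda+\alpha+\beta$, Theorem \ref{THM:HLS} yields $\|T_{3}f\|_{L^{q}(\mathbb{R}^{N})}\lesssim\|f\|_{L^{p}(\mathbb{R}^{N})}$.

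The off-diagonal pieces $T_{1}$ and $T_{2}$ reduce to one-dimensional (radial) Hardy operators. On $D_{1}$ one has $T_{1}f(x)\lesssim|x|^{-\beta-\lambda}\int_{|y|<|x|/2}|y|^{-\alpha}f(y)\,dy$, a weighted averaging operator; passing to polar coordinates and applying the classical integral Hardy inequality bounds its $L^{q}$-norm by $\|f\|_{L^{p}(\mathbb{R}^{N})}$ precisely when $\alpha<\frac{N(p-1)}{p}$, which is the condition ensuring that $|y|^{-\alpha}$ is locally in $L^{p'}$ and hence that the inner integral converges with the correct control. Dually, $T_{2}$ involves the tail integral $\int_{|y|>2|x|}$ and is controlled by the adjoint integral Hardy inequality under the condition $\beta<\frac{N}{q}$, which is exactly what makes the output weight $|x|^{-\beta}$ square-summable near the origin in the $L^{q}$ sense. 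Summing the three estimates gives the asserted inequality.

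The main obstacle I anticipate is the bookkeeping of the exponent conditions rather than any single hard estimate: one must verify that $\alpha<\frac{N(p-1)}{p}$ and $\beta<\frac{N}{q}$ are exactly the thresholds demanded by the Hardy inequalities on $D_{1}$ and $D_{2}$, that $\alpha+\beta\geq0$ is what permits absorbing both weights into $|x-y|^{-(\alpha+\beta)}$ on $D_{3}$, and that $0<\lambda+\alpha+\beta<N$ holds (so that Theorem \ref{THM:HLS} genuinely applies), the latter following from $\lambda>0$, $\alpha+\beta\geq0$ and the balance relation together with $p\leq q$.
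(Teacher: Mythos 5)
First, a remark on the comparison you asked for: the paper does not prove Theorem \ref{Classiacal_Stein-Weiss_inequality} at all --- it is quoted as the classical result of Stein and Weiss \cite{StWe58} and used only as background --- so there is no in-paper argument to measure yours against. I therefore review the proposal on its own terms.

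Your substitution $f=|x|^{\alpha}u$ and the three-region splitting is the standard route, and it is essentially correct in the regime $1<p<q<\infty$: for $T_{1}$ and $T_{2}$ the Muckenhoupt-type conditions for the radial Hardy operators are exactly $\alpha<N/p'$ and $\beta<N/q$, the complementary integrability requirements ($\beta+\lambda>N/q$ for the outer integral in $T_{1}$ and $\alpha+\lambda>N/p'$ for the tail in $T_{2}$) follow from the balance relation combined with those two hypotheses, and the scaling exponent of the product of the two factors vanishes by the balance relation. The genuine gap is the diagonal piece when $p=q$, which your statement includes ($1<p\le q<\infty$). The balance relation forces $\lambda+\alpha+\beta=N$ exactly when $p=q$, so the condition $0<\lambda+\alpha+\beta<N$ fails (your deduction from $p\le q$ only yields $\le N$), Theorem \ref{THM:HLS} does not apply since it requires $p<q$, and, worse, your pointwise domination of the $D_{3}$-kernel by $|x-y|^{-(\lambda+\alpha+\beta)}=|x-y|^{-N}$ majorizes $T_{3}f$ by an integral that diverges logarithmically at $y=x$. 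The true kernel $|x|^{-\beta}|x-y|^{-\lambda}|y|^{-\alpha}$ is locally integrable on $D_{3}$ because $\lambda<N$; the loss occurs precisely where you absorb $|x|^{-(\alpha+\beta)}$ into the singularity. For $p=q$ one instead observes that the full kernel is homogeneous of degree $-N$ and invokes the classical $L^{p}$-boundedness criterion for such kernels (a Schur-type test requiring $\int K(e_{1},y)|y|^{-N/p}\,dy<\infty$, which is exactly where $\alpha<N/p'$, $\beta<N/p$ and $\lambda<N$ enter). Either treat that case separately or restrict your claim to $p<q$.
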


The Hardy-Littlewood-Sobolev inequality on Euclidean spaces and the regularity of fractional integrals was studied in \cite{CF}, \cite{FM}, \cite{MW}  and \cite{Per}. On the Heisenberg group, Folland and Stein in \cite{FS74} obtained the Hardy-Littlewood-Sobolev inequality and in \cite{HLZ} the authors also proved an analogue of the Stein-Weiss inequality.
In  \cite{GMS} the authors studied the Stein-Weiss inequality on the Carnot groups. On homogeneous Lie groups, the Hardy-Littlewood-Sobolev and Stein-Weiss inequalities were obtained in \cite{RY} and \cite{KRS}.
In \cite{JD} the author proved the Stein-Weiss inequality on the Euclidean half-space.

The reverse Hardy-Littlewood-Sobolev inequality in the Euclidean space was obtained in the works  \cite{JZ}, \cite{CDDFF} and \cite{NN}. In \cite{CLT1} and \cite{CLT}, the authors obtained the reverse Stein-Weiss inequality on the Euclidean space and half-space, respectively. In this paper, we first show the reverse Stein-Weiss inequality on the homogeneous  groups.
In the proof we use special properties of  homogeneous norms of the homogeneous Lie groups and reverse integral Hardy inequality, which are playing  key roles in our calculations. Thus, in Theorem \ref{stein-weiss3} we establish the reverse Stein-Weiss inequality on general homogeneous groups based on the reverse integral Hardy inequalities with one negative exponent.
In particular, the obtained result recovers the previously  known results of Abelian (Euclidean), Heisenberg, Carnot groups since the class of the homogeneous Lie groups contains those and since we can work with an arbitrary homogeneous quasi-norm.
Note that in this direction systematic studies of different functional inequalities on  general homogeneous (Lie) groups were initiated by the paper \cite{RSAM}. We refer to this and other papers by the authors (e.g. \cite{RSY1}) for further discussions.

We also note that the best constant in the Hardy-Littlewood-Sobolev inequality on the Heisenberg group is now known, see Frank and Lieb \cite{FL12} (in the Euclidean case this was done earlier by Lieb in \cite{Lie83}). The expression for the best constant depends on the particular quasi-norm used and may change for a different choice of a quasi-norm. 

A multidimensional version of one of the well-known inequalities of G.H. Hardy is
\begin{equation}\label{dirhar}
\left\|\frac{f}{|x|}\right\|_{L^{p}(\mathbb{R}^{N})}\leq \frac{p}{N-p}\|\nabla f\|_{L^{p}(\mathbb{R}^{N})},\,\,\,\,1<p<N,
\end{equation}
where $f\in C^{\infty}_{0}(\mathbb{R}^{N})$, $\nabla$ is the Euclidean gradient and constant $\frac{p}{N-p}$ is sharp. The Hardy inequality was intensively studied. For example, the Hardy inequalities were considered on the Euclidean space in \cite{HHAT}, \cite{HL}, on the Heisenberg group in \cite{DA}, \cite{DGP}, on stratified groups in \cite{CCR}, \cite{RSS1, RSS2, RSS3}, \cite{RSAMS}, \cite{RY1}, for the vector fields in \cite{RSS4} and \cite{RSVF}, on homogeneous groups in \cite{RSAM}, \cite{RSY1}.
In \cite{RSAM} (see e.g., \cite{RS_book}), authors showed the Hardy inequality with radial derivative on homogeneous groups in the following form:
\begin{thm}\label{dirHardy}
Let $\G$ be a homogeneous group of homogeneous dimension $Q$. Let $|\cdot|$
be a homogeneous quasi-norm on $\G$. Let $1 < p < Q$.
Let $f\in C^{\infty}_{0}(\G\setminus\{0\})$ be a complex-valued function. Then
\begin{equation}\label{dirharhom}
\left\|\frac{f}{|x|}\right\|_{L^{p}(\G)}\leq \frac{p}{Q-p}\|\R f\|_{L^{p}(\G)},\,\,\,\,\, 1 < p < Q,
\end{equation}
where $\R=\frac{d}{d|x|}$ is the radial derivative. The constant $\frac{p}{Q-p}$ is sharp.
\end{thm}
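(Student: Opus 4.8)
The plan is to reduce the inequality to a one-dimensional weighted Hardy inequality by passing to polar coordinates on $\G$. Recall that on a homogeneous group there is a unique positive measure $d\sigma$ on the unit sphere $\S=\{x\in\G:|x|=1\}$ such that every integrable $g$ satisfies
\begin{equation*}
\int_{\G}g(x)\,dx=\int_{0}^{\infty}\int_{\S}g(ry)\,r^{Q-1}\,d\sigma(y)\,dr,
\end{equation*}
and, crucially, the radial derivative acts as $\R f(ry)=\frac{d}{dr}f(ry)$. First I would apply this decomposition to the left-hand side, writing
\begin{equation*}
\left\|\frac{f}{|x|}\right\|_{L^{p}(\G)}^{p}=\int_{\S}\int_{0}^{\infty}|f(ry)|^{p}\,r^{Q-1-p}\,dr\,d\sigma(y),
\end{equation*}
so that the whole problem splits into a family of one-dimensional estimates indexed by $y\in\S$.

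For fixed $y$ set $h(r)=f(ry)$, a smooth function compactly supported in $(0,\infty)$. The core step is then the one-dimensional weighted Hardy inequality
\begin{equation*}
\int_{0}^{\infty}|h(r)|^{p}\,r^{Q-1-p}\,dr\leq\Big(\frac{p}{Q-p}\Big)^{p}\int_{0}^{\infty}|h'(r)|^{p}\,r^{Q-1}\,dr.
\end{equation*}
I would prove this by integrating $|h|^{p}\,r^{Q-1-p}$ by parts against $\tfrac{1}{Q-p}\,d(r^{Q-p})$; since $1<p<Q$ the factor $1/(Q-p)$ is positive and, as $h$ is compactly supported, the boundary terms vanish. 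Using $\frac{d}{dr}|h|^{p}=p|h|^{p-2}\,\mathrm{Re}(\bar h\,h')$ together with $|h|^{p-2}|\mathrm{Re}(\bar h h')|\le|h|^{p-1}|h'|$ gives
\begin{equation*}
\int_{0}^{\infty}|h|^{p}r^{Q-1-p}\,dr\leq\frac{p}{Q-p}\int_{0}^{\infty}|h|^{p-1}|h'|\,r^{Q-p}\,dr.
\end{equation*}
Splitting $r^{Q-p}=r^{(Q-1-p)(p-1)/p}\,r^{(Q-1)/p}$ and applying H\"older with exponents $p'=p/(p-1)$ and $p$ yields the product of $\big(\int|h|^{p}r^{Q-1-p}\big)^{(p-1)/p}$ and $\big(\int|h'|^{p}r^{Q-1}\big)^{1/p}$; dividing through by the first factor (finite and nonzero unless $h\equiv0$) and raising to the power $p$ produces the constant $(p/(Q-p))^{p}$.

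Integrating the one-dimensional inequality over $\S$ in $d\sigma(y)$ and converting back by the polar formula gives the stated estimate. For the sharpness of $\frac{p}{Q-p}$ I would track the equality case of H\"older: it forces $|h'|/|h|\sim c/r$, i.e.\ $h(r)\sim r^{-c}$ with the borderline exponent $c=(Q-p)/p$, corresponding to the radial profile $|x|^{-(Q-p)/p}$. Since this profile is not admissible, I would take a sequence of radial cutoffs of $|x|^{-(Q-p)/p+\varepsilon}$, truncated near $0$ and $\infty$, and verify that the ratio of the two sides tends to $(p/(Q-p))^{p}$ as $\varepsilon\to0$ and the truncation is removed. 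The main obstacle is precisely this sharpness construction: one must regularize the non-smooth extremizer and check that the errors from the cutoffs and from the $\varepsilon$-shift are negligible in the limit, whereas the inequality itself follows cleanly once the polar decomposition and the action of $\R$ are in hand.
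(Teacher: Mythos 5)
Your proof is correct and follows essentially the same route as the argument behind this theorem (which the paper quotes from \cite{RSAM} without reproving, and which mirrors the paper's own proofs of the reverse analogues): polar decomposition \eqref{EQ:polar}, integration by parts in the radial variable against $d(r^{Q-p})$, and H\"older's inequality with exponents $p$ and $p'$, yielding the constant $\frac{p}{Q-p}$. The only portion left as a sketch is the sharpness, but the truncation and regularization of the formal extremizer $|x|^{-(Q-p)/p}$ that you describe is exactly the standard construction.
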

In Abelian case $(\mathbb{R}^{N},+)$ with $Q=N$ and $|\cdot|=|\cdot|_{E}$ where $|\cdot|_{E}$ is the standard Euclidean distance, from \eqref{dirharhom} we have
\begin{equation}\label{dirharr}
\left\|\frac{f}{|x|_{E}}\right\|_{L^{p}(\mathbb{R}^{N})}\leq \frac{p}{N-p}\left\|\frac{x}{|x|_{E}}\cdot \nabla f\right\|_{L^{p}(\mathbb{R}^{N})},\,\,\,\,\, 1 < p < N,
\end{equation}
where $\nabla$ is the standard Euclidean gradient. By using the Cauchy-Schwartz inequality from the inequality \eqref{dirharr}, we get \eqref{dirhar}.
 In this note, we show the reverse Hardy inequality on homogeneous general Lie groups.

 We are also interested in Sobolev and Caffarelli-Kohn-Nirenberg inequalities, let us recall them briefly. In the classical work of Sobolev, he showed the following inequality:
\begin{equation}
\|u\|_{L^{p^{*}}(\mathbb{R}^{N})}\leq C\|\nabla u\|_{L^{p}(\mathbb{R}^{N})},\,\,\,1<p<N,
\end{equation}
where $p^{*}=\frac{Np}{N-p}$. The Sobolev inequalities on stratified groups were obtained in \cite{RSY1, RSY2, RSY3}. In \cite{RSY3}, on  homogeneous groups, the authors showed $L^{p}$-Sobolev inequality in the following form:
\begin{thm}\label{dirSob}
Let $\G$ be a homogeneous group of homogeneous dimension $Q$. Let $|\cdot|$
be a homogeneous quasi-norm on $\G$. Let $1 < p < Q$.
Let $f\in C^{\infty}_{0}(\G\setminus\{0\})$ be a complex-valued function. Then
\begin{equation}\label{dirsob}
\left\|f\right\|_{L^{p}(\G)}\leq \frac{p}{Q}\|\mathbb{E} f\|_{L^{p}(\G)},\,\,\,\,\, 1 < p < \infty,
\end{equation}
where $\mathbb{E}=|x|\frac{d}{d|x|}$ is the Euler operator. The constant $\frac{p}{Q}$ is sharp.
\end{thm}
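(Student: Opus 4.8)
The plan is to reduce the inequality to a one-dimensional weighted estimate along the dilation rays by using polar coordinates on $\G$, and then to prove that one-dimensional estimate by a single integration by parts followed by H\"older's inequality. First I would recall the polar decomposition on a homogeneous group: there is a unique positive measure $d\sigma$ on the unit quasi-sphere $\S=\{x\in\G:|x|=1\}$ such that
\[
\int_{\G}h(x)\,dx=\int_{0}^{\infty}\int_{\S}h(\delta_{r}y)\,r^{Q-1}\,d\sigma(y)\,dr
\]
for every integrable $h$, where $\delta_{r}$ denotes the dilation. For a fixed $y\in\S$, set $g(r):=f(\delta_{r}y)$; since $|\delta_{r}y|=r$, the radial derivative satisfies $\R f(\delta_{r}y)=g'(r)$, and hence $(\mathbb{E}f)(\delta_{r}y)=r\,g'(r)$. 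Because $f\in C_{0}^{\infty}(\G\setminus\{0\})$, each $g$ is smooth and compactly supported in $(0,\infty)$. Applying the polar formula to $|f|^{p}$ and to $|\mathbb{E}f|^{p}$, it therefore suffices to establish, for each such $g$, the estimate
\[
\int_{0}^{\infty}|g(r)|^{p}\,r^{Q-1}\,dr\leq\left(\frac{p}{Q}\right)^{p}\int_{0}^{\infty}|g'(r)|^{p}\,r^{Q+p-1}\,dr,
\]
since $|g'(r)|^{p}r^{Q+p-1}=|rg'(r)|^{p}r^{Q-1}$, and then integrate the result in $y$ over $\S$. Note that this requires only $1<p<\infty$, matching the range in the displayed inequality.

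For the one-dimensional estimate I would integrate by parts in $\int_{0}^{\infty}|g|^{p}r^{Q-1}\,dr$ with $v=r^{Q}/Q$; the boundary terms vanish because $g$ is supported away from $0$ and $\infty$, and using $\frac{d}{dr}|g|^{p}=p|g|^{p-2}\mathrm{Re}(\overline{g}\,g')$ one obtains
\[
\int_{0}^{\infty}|g|^{p}r^{Q-1}\,dr=-\frac{p}{Q}\int_{0}^{\infty}|g|^{p-2}\mathrm{Re}(\overline{g}\,g')\,r^{Q}\,dr\leq\frac{p}{Q}\int_{0}^{\infty}|g|^{p-1}|g'|\,r^{Q}\,dr.
\]
Splitting $r^{Q}=r^{(Q-1)(p-1)/p}\cdot r^{(Q+p-1)/p}$ and applying H\"older's inequality with exponents $p/(p-1)$ and $p$ gives
\[
\int_{0}^{\infty}|g|^{p-1}|g'|\,r^{Q}\,dr\leq\left(\int_{0}^{\infty}|g|^{p}r^{Q-1}\,dr\right)^{\frac{p-1}{p}}\left(\int_{0}^{\infty}|g'|^{p}r^{Q+p-1}\,dr\right)^{\frac{1}{p}}.
\]
Dividing through by the first factor on the right (which is finite and may be assumed nonzero, else the inequality is trivial) yields the claimed one-dimensional bound with constant $(p/Q)^{p}$; integrating in $y$ over $\S$ then recovers $\|f\|_{L^{p}(\G)}\leq\frac{p}{Q}\|\mathbb{E}f\|_{L^{p}(\G)}$.

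For sharpness I would test the family obtained from the formal extremizer $g(r)=r^{-Q/p}$, which satisfies $rg'(r)=-\frac{Q}{p}g(r)$ and hence realizes equality in the chain above, but which fails to be integrable against $r^{Q-1}\,dr$ at both endpoints. The remedy is to truncate: take $g_{\varepsilon}$ equal to $r^{-Q/p}$ on a long annulus and smoothly cut off near $0$ and $\infty$, then check that the ratio $\|f\|_{L^{p}(\G)}/\|\mathbb{E}f\|_{L^{p}(\G)}$ tends to $p/Q$ as the annulus exhausts $(0,\infty)$. I expect the only genuine work to be this sharpness step: the polar reduction and the main estimate are routine, whereas constructing the cutoff family and controlling the contribution of the transition regions --- so that it is negligible compared with the logarithmically divergent main term --- requires the usual careful bookkeeping.
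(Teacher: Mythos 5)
The paper does not actually prove Theorem \ref{dirSob} (it is quoted from \cite{RSY3} without proof), but your argument is correct and follows essentially the same route that the paper uses for its reverse counterpart: polar decomposition, a single radial integration by parts, and H\"older's inequality, with sharpness obtained from truncations of $|x|^{-Q/p}$. You are also right that only $1<p<\infty$ is used, the hypothesis $1<p<Q$ in the statement being superfluous for this inequality.
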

In Abelian case $(\mathbb{R}^{N},+)$ with $Q=N$ and $|\cdot|=|\cdot|_{E}$ where $|\cdot|_{E}$ is the standard Euclidean distance, from \eqref{dirharhom} we have
\begin{equation}\label{dirsobr}
\left\|f\right\|_{L^{p}(\mathbb{R}^{N})}\leq \frac{p}{N-p}\left\|x\cdot \nabla f\right\|_{L^{p}(\mathbb{R}^{N})},\,\,\,\,\, 1 < p < \infty,
\end{equation}
where $\nabla$ is the standard Euclidean gradient. In the Euclidean case, this $L^{p}$-Sobolev  inequality  has been considered in \cite{OS}.

In their fundamental work \cite{CKN}, L. Caffarelli, R. Kohn and L. Nirenberg established:
\begin{thm}
Let $N\geq1$, and let $l_1$, $l_2$, $l_3$, $a, \, b, \, d,\, \delta \in \mathbb{R}$ be such that $l_1, l_2 \geq 1$,
$l_3 > 0, \,\,0 \leq \delta \leq 1,$ and
\begin{equation}
\frac{1}{l_1}+\frac{a}{N},\,\,\,\frac{1}{l_2}+\frac{b}{N},\,\,\,\frac{1}{l_3}+\frac{\delta d+(1-\delta) b}{N}>0.
\end{equation}
Then,
\begin{equation}
\||x|^{\delta d+(1-\delta) b}u\|_{L^{l_{3}}(\mathbb{R}^{N})}\leq C\||x|^{a}\nabla u\|^{\delta}_{L^{l_{1}}(\mathbb{R}^{N})}\||x|^{b} u\|^{1-\delta}_{L^{l_{2}}(\mathbb{R}^{N})},\,\,\,u\in C^{\infty}_{c}(\mathbb{R}^{N}),
\end{equation}
if and only if
\begin{multline}
\frac{1}{l_3}+\frac{\delta d+(1-\delta) b}{N}=\delta\left(\frac{1}{l_{1}}+\frac{a-1}{N}\right)+
(1-\delta)\left(\frac{1}{l_2}+\frac{b}{N}\right),\\
a-d\geq0,\,\,\,\,\text{if}\,\,\,\delta>0,\\
a-d\leq1,\,\,\,\,\text{if}\,\,\,\delta>0\,\,\,\text{and}\,\,\,\frac{1}{l_3}+\frac{\delta d+(1-\delta) b}{N}=\frac{1}{l_1}+\frac{a-1}{N},
\end{multline}
where $C$ is a positive constant independent of $u$.
\end{thm}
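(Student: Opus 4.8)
The plan is to establish both the necessity and the sufficiency of the stated conditions, regarding the inequality as a weighted Sobolev estimate that can be reduced, through a representation formula for $u$ in terms of $\nabla u$, to the Stein--Weiss inequality of Theorem~\ref{Classiacal_Stein-Weiss_inequality}. The two genuine ingredients are a scaling analysis (for necessity) and this reduction (for sufficiency), with a Hölder interpolation bridging the general exponent $\delta$ and the endpoint case $\delta=1$.

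For \emph{necessity}, I would test the inequality against the dilations $u_{\lambda}(x)=u(\lambda x)$ for $\lambda>0$. Each of the three weighted norms rescales by an explicit power of $\lambda$, and requiring the inequality to persist uniformly as $\lambda\to 0$ and $\lambda\to\infty$ forces these powers to agree on the two sides, which is exactly the dimensional balance identity
$$\frac{1}{l_3}+\frac{\delta d+(1-\delta) b}{N}=\delta\left(\frac{1}{l_{1}}+\frac{a-1}{N}\right)+(1-\delta)\left(\frac{1}{l_2}+\frac{b}{N}\right).$$
The sign restrictions $a-d\geq 0$ and $a-d\leq 1$ (in their stated regimes) would then follow by inserting test functions that concentrate near the origin and near infinity, respectively, where the competition between the weight $|x|^{a}$ acting on the gradient and the weight $|x|^{d}$ acting on $u$ determines whether the integrals remain finite.

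For \emph{sufficiency}, I would first reduce to the case $\delta=1$. Writing $|x|^{\delta d+(1-\delta)b}|u|=\bigl(|x|^{d}|u|\bigr)^{\delta}\bigl(|x|^{b}|u|\bigr)^{1-\delta}$ and applying Hölder's inequality in $L^{l_3}$ with the conjugate pair chosen so that the second factor yields precisely $\||x|^{b}u\|_{L^{l_2}}^{1-\delta}$, the estimate factors as
$$\||x|^{\delta d+(1-\delta)b}u\|_{L^{l_3}}\leq \||x|^{d}u\|_{L^{s}}^{\delta}\,\||x|^{b}u\|_{L^{l_2}}^{1-\delta},$$
where the intermediate exponent $s$ is pinned down by the balance identity. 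The case $\delta=0$ is then immediate, and there remains only the weighted Sobolev inequality $\||x|^{d}u\|_{L^{s}}\leq C\||x|^{a}\nabla u\|_{L^{l_1}}$ corresponding to $\delta=1$.

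To prove this $\delta=1$ inequality I would use the pointwise representation, valid for $u\in C_c^{\infty}(\mathbb{R}^N)$,
$$|u(x)|\leq \frac{1}{\omega_{N-1}}\int_{\mathbb{R}^N}\frac{|\nabla u(y)|}{|x-y|^{N-1}}\,dy=\frac{1}{\omega_{N-1}}I_{N-1}(|\nabla u|)(x),$$
so that $\||x|^{d}u\|_{L^{s}}\leq \frac{1}{\omega_{N-1}}\||x|^{d}I_{N-1}(|\nabla u|)\|_{L^{s}}$. Applying Theorem~\ref{Classiacal_Stein-Weiss_inequality} with $\lambda=N-1$, $p=l_1$, $q=s$, $\alpha=a$ and $\beta=-d$ then gives the claim, and one verifies that the Stein--Weiss balance $\frac{1}{q}=\frac{1}{p}+\frac{\lambda+\alpha+\beta}{N}-1$ reduces exactly to the $\delta=1$ balance $\frac{1}{s}+\frac{d}{N}=\frac{1}{l_1}+\frac{a-1}{N}$, while the hypotheses $\alpha+\beta\geq 0$, $\alpha<\frac{N(p-1)}{p}$ and $\beta<\frac{N}{q}$ correspond to $a-d\geq 0$ together with the positivity of $\frac{1}{l_1}+\frac{a}{N}$ and of $\frac{1}{s}+\frac{d}{N}$. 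The main obstacle is the borderline regime: when $a-d=1$ the Stein--Weiss exponents collapse to $q=p$ and the inequality degenerates into a weighted Hardy inequality, so precisely this endpoint, which is where the extra constraint $a-d\leq 1$ is imposed, must be treated separately by a Hardy-type argument rather than by Theorem~\ref{Classiacal_Stein-Weiss_inequality}; similarly the extreme value $l_1=1$ lies outside the strict Stein--Weiss range and would require a direct approximation argument.
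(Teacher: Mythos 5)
First, note that the paper does not prove this statement at all: it is quoted as background from the original work of Caffarelli, Kohn and Nirenberg \cite{CKN}, to motivate the reverse inequalities that are the paper's actual subject. So there is no in-paper argument to compare yours against, and your proposal has to stand on its own as a proof of the classical theorem.

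As such, it is a reasonable outline of one standard route (scaling for the necessity of the balance identity; H\"older interpolation to reduce to $\delta=1$; the representation $|u|\le c\,I_{N-1}(|\nabla u|)$ plus Stein--Weiss for the endpoint), but it has genuine gaps. The most serious is the claimed correspondence of hypotheses: the Stein--Weiss condition $\alpha<\frac{N(p-1)}{p}$ with $\alpha=a$, $p=l_1$ reads $\frac{1}{l_1}+\frac{a}{N}<1$, which is an \emph{upper} bound and is neither implied by nor equivalent to the CKN hypothesis $\frac{1}{l_1}+\frac{a}{N}>0$; for large positive $a$ the theorem is still asserted to hold while Theorem \ref{Classiacal_Stein-Weiss_inequality} is simply not applicable, so a whole parameter regime escapes your reduction. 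Further uncovered regimes include $l_1=1$ (excluded by $1<p$ in Stein--Weiss), $N=1$ (where $\lambda=N-1=0$ lies outside $0<\lambda<N$), and the borderline $a-d=1$, which you correctly flag but do not treat. You also do not verify that the intermediate exponent $s$ produced by the balance identity is an admissible Lebesgue exponent (positive, and with $s\ge l_1$ so that $p\le q$ in Stein--Weiss), and the necessity of $a-d\ge 0$ and $a-d\le 1$ in their stated regimes is only gestured at with unspecified test functions, although these are part of the ``if and only if.'' The original proof in \cite{CKN} proceeds instead by direct integration by parts and an extensive case analysis precisely because no single reduction to a fractional-integral inequality covers all admissible parameters.
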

On homogeneous groups,  $L^{p}$-Caffarelli-Kohn-Nirenberg inequality was obtained in \cite{ORS}. 
\begin{thm}\label{LpCKN}
Let $\mathbb{G}$ be a homogeneous group
of homogeneous dimension $Q$ and let $\alpha,\,\beta\in \mathbb{R}$.
Then for all complex-valued functions $f\in C^{\infty}_{0}(\mathbb{G}\backslash\{0\}),$ $1<p<\infty,$
and any homogeneous quasi-norm $|\cdot|$ on $\mathbb{G}$ we have
\begin{equation}\label{eqLpCKN}
\frac{|Q-\gamma|}{p}
\left\|\frac{f}{|x|^{\frac{\gamma}{p}}}\right\|^{p}_{L^{p}(\mathbb{G})}\leq
\left\|\frac{1}{|x|^{\alpha}}\mathcal{R} f\right\|_{L^{p}(\mathbb{G})}\left\|\frac{f}{|x|^{\frac{\beta}{p-1}}}\right\|^{p-1}_{L^{p}(\mathbb{G})},
\end{equation}
where $\gamma=\alpha+\beta+1$. If $\gamma\neq Q$ then the constant $\frac{|Q-\gamma|}{p}$ is sharp.
\end{thm}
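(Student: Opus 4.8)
The plan is to reduce the whole estimate to a one-dimensional computation along rays, exploiting the polar decomposition available on any homogeneous group. Writing $\int_{\G} g\,dx = \int_{\S}\int_0^\infty g(r\sigma)\,r^{Q-1}\,dr\,d\sigma$, where $\S$ is the unit quasi-sphere, I would start from the left-hand quantity and rewrite the radial weight as a derivative. Since $\gamma\neq Q$, one has $r^{Q-1-\gamma}=\frac{1}{Q-\gamma}\frac{d}{dr}\left(r^{Q-\gamma}\right)$, so that
\begin{equation*}
\int_{\G}\frac{|f|^{p}}{|x|^{\gamma}}\,dx=\frac{1}{Q-\gamma}\int_{\S}\int_0^\infty |f(r\sigma)|^{p}\,\frac{d}{dr}\left(r^{Q-\gamma}\right)\,dr\,d\sigma.
\end{equation*}

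Next I would integrate by parts in $r$. Because $f\in C^{\infty}_{0}(\G\setminus\{0\})$, the map $r\mapsto f(r\sigma)$ is supported in a compact subset of $(0,\infty)$, so all boundary terms vanish, and using $\frac{d}{dr}|f|^{p}=p|f|^{p-1}\frac{d}{dr}|f|$ I obtain
\begin{equation*}
\int_{\G}\frac{|f|^{p}}{|x|^{\gamma}}\,dx=\frac{-p}{Q-\gamma}\int_{\S}\int_0^\infty |f|^{p-1}\,\frac{d|f|}{dr}\,r^{Q-\gamma}\,dr\,d\sigma.
\end{equation*}
The one step that needs care for complex-valued $f$ is the pointwise diamagnetic bound $\left|\frac{d}{dr}|f|\right|\leq\left|\frac{df}{dr}\right|=|\mathcal{R}f|$; applying it, taking absolute values, and returning to group coordinates via $r^{Q-\gamma}=r^{Q-1}r^{-\alpha-\beta}$ (recall $\gamma=\alpha+\beta+1$) yields
\begin{equation*}
\int_{\G}\frac{|f|^{p}}{|x|^{\gamma}}\,dx\leq\frac{p}{|Q-\gamma|}\int_{\G}|f|^{p-1}\,|\mathcal{R}f|\,|x|^{-\alpha-\beta}\,dx.
\end{equation*}

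It then remains to split the last integrand as $\left(\frac{|\mathcal{R}f|}{|x|^{\alpha}}\right)\left(\frac{|f|}{|x|^{\beta/(p-1)}}\right)^{p-1}$ and apply H\"older's inequality with exponents $p$ and $p/(p-1)$; this produces exactly $\left\|\frac{1}{|x|^{\alpha}}\mathcal{R}f\right\|_{L^{p}(\G)}\left\|\frac{f}{|x|^{\beta/(p-1)}}\right\|^{p-1}_{L^{p}(\G)}$, and moving the factor $\frac{|Q-\gamma|}{p}$ to the left gives precisely \eqref{eqLpCKN}. For the sharpness of $\frac{|Q-\gamma|}{p}$ when $\gamma\neq Q$, I would test on radial functions $f(x)=h(|x|)$: the diamagnetic bound is then an equality, and equality in H\"older forces $\frac{|\mathcal{R}f|^{p}}{|x|^{\alpha p}}$ to be proportional to $\frac{|f|^{p}}{|x|^{\beta p/(p-1)}}$, which singles out power-type profiles $h(r)\sim r^{-s}$ for a suitable exponent $s$. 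Since these are not in $C^{\infty}_{0}$, I would use a cut-off sequence approximating such a profile and show the ratio of the two sides tends to $\frac{p}{|Q-\gamma|}$. The main obstacle is exactly this last point: verifying that the optimizing power is admissible in the limit and that the truncation errors are negligible, so that no constant smaller than $\frac{|Q-\gamma|}{p}$ can work.
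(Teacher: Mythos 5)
You should first note that the paper does not actually prove Theorem \ref{LpCKN}: it is stated as a known result imported from \cite{ORS}, so there is no in-paper proof to compare against. That said, your derivation of the inequality itself is correct and is essentially the standard argument (it also mirrors, with H\"older in place of reverse H\"older, the proofs the paper gives for its reverse Hardy, Sobolev and Caffarelli--Kohn--Nirenberg inequalities): polar decomposition, writing $r^{Q-1-\gamma}=\frac{1}{Q-\gamma}\frac{d}{dr}\left(r^{Q-\gamma}\right)$, integration by parts with no boundary terms, the pointwise bound $\left|\frac{d}{dr}|f|\right|\leq|\mathcal{R}f|$, and H\"older with exponents $p$ and $p/(p-1)$. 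The case $\gamma=Q$ that you implicitly exclude is harmless, since then the left-hand side of \eqref{eqLpCKN} vanishes.

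The genuine gap is in your sharpness sketch. Equality in your H\"older step forces $\left(\frac{|\mathcal{R}f|}{|x|^{\alpha}}\right)^{p}$ to be proportional to $\frac{|f|^{p}}{|x|^{\beta p/(p-1)}}$, i.e. $\frac{|\mathcal{R}f|}{|f|}=c\,|x|^{\alpha-\beta/(p-1)}$. A power profile $h(r)=r^{-s}$ has $\frac{|h'|}{|h|}=s/r$, so powers saturate H\"older only in the degenerate case $1+\alpha-\beta/(p-1)=0$ (which covers the Hardy case $\alpha=0$, $\beta=p-1$, but not general $\alpha,\beta$). The profiles actually singled out by the equality condition are $h(r)=\exp\left(-\frac{c}{\kappa}r^{\kappa}\right)$ with $\kappa=1+\alpha-\frac{\beta}{p-1}\neq 0$, and one must additionally check that such a profile renders all three integrals finite (this imposes sign conditions involving $Q-\gamma$ and $\kappa$) before running the cut-off approximation. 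So ``test on $r^{-s}$ with a cut-off'' will not in general produce a ratio tending to $\frac{p}{|Q-\gamma|}$; the sharpness part needs to be redone with the exponential-of-a-power family, or taken from the argument in \cite{ORS}.
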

The Caffarelli-Kohn-Nirenberg inequalities, on stratified and homogeneous groups  were obtained in \cite{RSY1, RSY2, RSY3}. On homogeneous groups in \cite{ORS},  a general  result which was obtained in particular cases gives the Hardy inequality \eqref{dirharhom} and $L^{p}$-Sobolev inequality \eqref{dirsob}.

 In this note, we show the reverse Hardy, Hardy-Littlewood-Sobolev, $L^{p}$-Sobolev, $L^{p}$-Caffarelli-Kohn-Nirenberg inequalities on homogeneous Lie groups. The main idea for proving such reverse inequalities was developed in \cite{RSAM}. In addition, to the best of our knowledge, such reverse
 inequalities above on a  homogeneous group $\G$ are new even in the Euclidean
case.

Summarising our main results of the present note, we prove the following facts:
\begin{itemize}
\item The reverse Stein-Weiss and Hardy-Littlewood-Sobolev inequalities on  homogeneous groups;
\item The reverse Hardy inequality  on  homogeneous groups;
\item The reverse $L^{p}$-Sobolev inequality  on homogeneous groups;
\item The reverse $L^{p}$-Caffarelli-Kohn-Nirenberg inequality on homogeneous groups.
\end{itemize}

\section{Main results}
\label{SEC:2}
 Let us recall that a Lie group (on $\mathbb{R}^{N}$) $\mathbb{G}$ with the dilation
$$D_{\lambda}(x):=(\lambda^{\nu_{1}}x_{1},\ldots,\lambda^{\nu_{N}}x_{N}),\; \nu_{1},\ldots, \nu_{n}>0,\; D_{\lambda}:\mathbb{R}^{N}\rightarrow\mathbb{R}^{N},$$
which is an automorphism of the group $\mathbb{G}$ for each $\lambda>0,$
is called a {\em homogeneous (Lie) group}. For simplicity, throughout this paper we use the notation $\lambda x$ for the dilation $D_{\lambda}.$  The homogeneous dimension of the homogeneous group $\mathbb{G}$ is denoted by $Q:=\nu_{1}+\ldots+\nu_{N}.$
Also, in this note we denote a homogeneous quasi-norm on $\mathbb{G}$ by $|x|$, which
is a continuous non-negative function
\begin{equation}
\mathbb{G}\ni x\mapsto |x|\in[0,\infty),
\end{equation}
with the properties

\begin{itemize}
	\item[i)] $|x|=|x^{-1}|$ for all $x\in\mathbb{G}$,
	\item[ii)] $|\lambda x|=\lambda |x|$ for all $x\in \mathbb{G}$ and $\lambda>0$,
	\item[iii)] $|x|=0$ iff $x=0$.
\end{itemize}
Moreover, the following polarisation formula on homogeneous Lie groups will be used in our proofs:
there is a (unique)
positive Borel measure $\sigma$ on the
unit quasi-sphere
$
\mathfrak{S}:=\{x\in \mathbb{G}:\,|x|=1\},
$
so that for every $f\in L^{1}(\mathbb{G})$ we have
\begin{equation}\label{EQ:polar}
\int_{\mathbb{G}}f(x)dx=\int_{0}^{\infty}
\int_{\mathfrak{S}}f(ry)r^{Q-1}d\sigma(y)dr.
\end{equation}
The quasi-ball centred at $x \in \mathbb{G}$ with radius $R > 0$ can be defined by
\begin{equation*}
B(x,R) := \{y \in\mathbb {G} : |x^{-1} y|< R\}.
\end{equation*}
We refer to \cite{FS1} for the original appearance of such groups, and to \cite{FR,RS_book} for a recent comprehensive treatment.

Let us consider the integral operator
\begin{equation}
I_{\lambda,|\cdot|}u(x)=|x|^{\lambda}*u=\int_{\mathbb{G}}|y^{-1} x|^{\lambda}u(y)dy,\,\,\,\lambda>0,
\end{equation}
where $*$ is the convolution.
Let us recall briefly the reverse H\"{o}lder's inequality.
\begin{thm}[\cite{AF}, Theorem 2.12, p. 27)]\label{Hol}
Let $p\in(0,1)$, so that $p'=\frac{p}{p-1}<0$. If non-negative functions satisfy $f\in L^{p}(\G)$ and $0<\int_{\G}g^{p'}(x)dx<+\infty,$ we have
\begin{equation}\label{Holin}
\int_{\G}f(x)g(x)dx\geq\left(\int_{\G}f^{p}(x)dx\right)^{\frac{1}{p}}\left(\int_{\G}g^{p'}(x)dx\right)^{\frac{1}{p'}}.
\end{equation}
\end{thm}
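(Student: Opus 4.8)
The plan is to derive \eqref{Holin} from the ordinary (forward) Hölder inequality by a judicious change of exponents, exploiting that $1/p>1$ when $0<p<1$. First I would extract two consequences of the hypothesis $0<\int_{\G}g^{p'}(x)\,dx<+\infty$. Since $p'<0$, the integrand $g^{p'}$ equals $+\infty$ wherever $g$ vanishes, so finiteness of its integral forces $g>0$ almost everywhere; moreover $\left(\int_{\G}g^{p'}\,dx\right)^{1/p'}$ is a finite, strictly positive number. Together with $f\in L^{p}(\G)$ this shows that the right-hand side of \eqref{Holin} is finite, so the asserted inequality is trivially true whenever $\int_{\G}fg\,dx=+\infty$; hence I may assume $\int_{\G}fg\,dx<+\infty$ from now on.

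The crux is to apply the forward Hölder inequality with the conjugate pair $r=\tfrac{1}{p}$ and $r'=\tfrac{1}{1-p}$, both of which lie in $(1,\infty)$ precisely because $0<p<1$, and which satisfy $\tfrac{1}{r}+\tfrac{1}{r'}=p+(1-p)=1$. I would apply it to the auxiliary functions $F=(fg)^{p}$ and $G=g^{-p}$, which are tailored so that $FG=f^{p}$. A short computation gives $F^{r}=fg$ and $G^{r'}=g^{-p/(1-p)}=g^{p'}$, so forward Hölder yields $\int_{\G}f^{p}\,dx\le\left(\int_{\G}fg\,dx\right)^{p}\left(\int_{\G}g^{p'}\,dx\right)^{1-p}$.

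It then remains to solve this relation for $\int_{\G}fg\,dx$. Dividing through by the finite positive quantity $\left(\int_{\G}g^{p'}\,dx\right)^{1-p}$ and raising both sides to the positive power $1/p$ preserves the direction of the inequality and gives $\int_{\G}fg\,dx\ge\left(\int_{\G}f^{p}\,dx\right)^{1/p}\left(\int_{\G}g^{p'}\,dx\right)^{-(1-p)/p}$; since $-(1-p)/p=(p-1)/p=1/p'$, this is exactly \eqref{Holin}. The only genuine obstacle is the opening, definitional step: because $p'<0$ the various integrals can be infinite for innocuous reasons, so the reduction to forward Hölder must be set up only after disposing of the degenerate configurations (namely $g=0$ on a set of positive measure, or $\int_{\G}fg\,dx=+\infty$) by means of the stated hypotheses. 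Once the exponents $r=1/p$, $r'=1/(1-p)$ and the substitution $F=(fg)^{p}$, $G=g^{-p}$ are identified, everything else is a reversible and routine manipulation.
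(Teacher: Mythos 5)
Your argument is correct, and since the paper does not prove this statement itself but quotes it from Adams--Fournier, there is nothing to contrast: your reduction to the forward H\"older inequality with conjugate exponents $r=1/p$, $r'=1/(1-p)$ applied to $F=(fg)^{p}$, $G=g^{-p}$ is precisely the standard proof given in the cited source. The preliminary observations (that $g>0$ a.e.\ under the convention $0^{p'}=+\infty$, and that the right-hand side is finite so one may assume $\int_{\G}fg\,dx<+\infty$) are exactly the bookkeeping needed to make that reduction legitimate.
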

Let us also recall a well-known fact about quasi-norms.
\begin{prop}[\cite{FR}, Theorem 3.1.39] \label{prop_quasi_norm}
Let $\mathbb{G}$ be a homogeneous Lie group. Then there exists a homogeneous
quasi-norm on $\mathbb{G}$ which is a norm, that is, a homogeneous quasi-norm $|\cdot|$
which satisfies the triangle inequality
\begin{equation}\label{tri}
|x y|\leq |x| + |y|, \,\,\,\forall x, y \in \mathbb{G}.
\end{equation}
Furthermore, all homogeneous quasi-norms on $\mathbb{G}$ are equivalent.
\end{prop}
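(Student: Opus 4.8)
The plan is to treat the statement as two essentially independent assertions: the existence of a homogeneous quasi-norm obeying the triangle inequality \eqref{tri}, and the mutual equivalence of all homogeneous quasi-norms. The equivalence is the softer part and rests only on homogeneity together with compactness, so I would dispatch it first. The genuine difficulty lies in producing a \emph{subadditive} homogeneous quasi-norm: a generic quasi-norm satisfies only a quasi-triangle inequality $|xy|\le C(|x|+|y|)$ with some $C\ge 1$, and improving the constant to $1$ is the substantive content of the Hebisch--Sikora construction.

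For equivalence I would first exhibit one concrete homogeneous quasi-norm, for instance
\[
|x|_0 := \Big(\sum_{j=1}^N |x_j|^{2/\nu_j}\Big)^{1/2},
\]
and check directly that it is continuous, vanishes only at $0$, and satisfies $|D_\lambda x|_0 = \lambda|x|_0$. Its unit sphere $\Sigma_0 := \{|x|_0 = 1\}$ is closed and bounded (if $\sum_j |x_j|^{2/\nu_j}=1$ then each $|x_j|\le 1$), hence compact. Now let $|\cdot|$ be an arbitrary homogeneous quasi-norm. Being continuous and strictly positive on the compact set $\Sigma_0$, it attains there a minimum $m>0$ and a maximum $M<\infty$. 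For $x\neq 0$ I write $x = D_t y$ with $t=|x|_0$ and $y = D_{1/t}x \in \Sigma_0$; homogeneity gives $|x| = t\,|y| = |x|_0\,|y|$, whence $m\,|x|_0 \le |x| \le M\,|x|_0$. Thus every homogeneous quasi-norm is equivalent to $|\cdot|_0$, and therefore any two of them are equivalent to each other.

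For the existence of a norm I would look for a bounded open set $\Omega\ni 0$ that is convex, symmetric under inversion ($\Omega^{-1}=\Omega$), and dilation star-shaped ($D_t\Omega\subseteq\Omega$ for $0<t\le 1$), and define its gauge $|x| := \inf\{t>0 : D_{1/t}x\in\Omega\}$. Since $D_\lambda$ is a group automorphism and all weights $\nu_j$ are positive, this gauge is automatically a homogeneous quasi-norm: positivity and $|x|=0\iff x=0$ come from boundedness of $\Omega$ and $0\in\Omega$, the identity $|D_\lambda x|=\lambda|x|$ from the substitution $s=t/\lambda$, and $|x^{-1}|=|x|$ from $\Omega^{-1}=\Omega$. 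Using homogeneity, the triangle inequality $|xy|\le|x|+|y|$ reduces, after rescaling $x,y$ by $D_{1/(|x|+|y|)}$, to the single implication: if $|u|+|v|=1$ then $uv\in\overline{\Omega}$. This ``group-convexity'' of the closed unit ball is the whole problem.

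The main obstacle is precisely this last implication, and it is where the non-abelian, nonlinear nature of the setting bites: the gauge is built from the nonlinear dilation flow $D_t = t^A$ with $A=\mathrm{diag}(\nu_1,\dots,\nu_N)$, so ordinary Euclidean convexity of $\Omega$ does not by itself translate into subadditivity under the group law. The device I would use, following Hebisch--Sikora, is to exploit that $\mathbb{G}$ is nilpotent: in exponential coordinates the product has the form $x\cdot y = x+y+r(x,y)$, where every monomial of the remainder $r$ has strictly higher homogeneous degree than the linear part. Measured in a smooth homogeneous quasi-norm, the correction is therefore of higher order and becomes negligible at small scale, so that on a sufficiently small Euclidean ball the group multiplication is a controlled perturbation of vector addition. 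Taking $\Omega$ to be a dilation-adapted modification of a small Euclidean ball, with the radius chosen small enough, one then upgrades the exact subadditivity furnished by Euclidean convexity to the required group-convexity of $\overline{\Omega}$. Carrying this perturbation estimate through rigorously is the heart of the matter and is exactly Theorem~3.1.39 of \cite{FR}.
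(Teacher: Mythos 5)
The paper offers no proof of this proposition at all: it is imported as a black box from Theorem 3.1.39 of \cite{FR}, and the existence of a subadditive homogeneous quasi-norm is the Hebisch--Sikora theorem. So your sketch can only be measured against that cited source. Your proof of the \emph{equivalence} statement is complete, correct, and is the standard argument: the explicit quasi-norm $|x|_0$, compactness of its unit sphere $\Sigma_0$, and the continuity and strict positivity of an arbitrary quasi-norm on $\Sigma_0$ yield $m|x|_0\le |x|\le M|x|_0$, which homogeneity propagates to all of $\mathbb{G}\setminus\{0\}$. (Two details worth making explicit: symmetry of $|\cdot|_0$ under inversion uses that $x^{-1}=-x$ in the exponential coordinates identifying $\mathbb{G}$ with $\mathbb{R}^N$, and the compactness argument needs the continuity of the arbitrary quasi-norm, which the paper builds into its definition.)

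The existence half, however, contains a genuine gap: your argument stops exactly where the theorem begins. The reduction of \eqref{tri} to the ``group-convexity'' of the closed unit ball of a gauge $\rho(x)=\inf\{t>0:\,D_{1/t}x\in\Omega\}$ is correct, and you correctly name the mechanism (the remainder in $x\cdot y=x+y+r(x,y)$ has strictly higher homogeneous degree), but the decisive quantitative step is asserted rather than proved, and you explicitly defer it back to Theorem 3.1.39 of \cite{FR}, which is circular. Note that this step is not a soft perturbation argument. After rescaling, one must show that $\rho(u)+\rho(v)=1$ forces $uv\in\overline{\Omega}$; in the abelian isotropic case this is exactly the Euclidean triangle inequality with \emph{no slack}, so for anisotropic dilations the higher-order correction $r(u,v)$ must be absorbed by a margin that degenerates (for instance as $\rho(v)\to 0$ with $\rho(u)\to 1$, where $u$ sits on the boundary of $\Omega$ and there is nothing to spare). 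The actual proof handles this by a derivative estimate on $t\mapsto\|D_{1/t}x\|^2$ together with the triangular structure of the dilation weights, with the radius of $\Omega$ chosen small to control the nonlinear terms uniformly; none of this appears in your sketch. As a description of the right strategy your proposal is accurate, but as a self-contained proof the subadditivity claim --- the entire content of the Hebisch--Sikora theorem --- remains unestablished.
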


The next theorem is the reverse integral version of Hardy inequalities on general homogeneous groups that will be instrumental in our proof.

\begin{thm}[\cite{KRS2}]\label{integral_hardy}
Let $\mathbb{G}$ be a homogeneous
group of homogeneous dimension $Q$. Assume that $p\in(0,1)$ and $q<0$.  Suppose that $W,U\geq0$ are locally integrable functions on $\mathbb{G}$. Then the inequality
\begin{equation}\label{5.2}
\left[\int_{\mathbb{G}}\left(\int_{B(0,|x|)}f(y)dy\right)^{q}W(x)dx\right]^{\frac{1}{q}}\geq C_{1}(p,q)\left(\int_{\mathbb{G}}f^{p}(x)U(x)dx\right)^{\frac{1}{p}}
\end{equation}
holds for some $C_{1}(p,q)>0$ and all non-negative measurable functions $f,$ if and only if
\begin{equation}\label{5.2.1}
0< A_{1}:=\inf_{x\neq a}\left[\left(\int_{\mathbb{G}\setminus B(0,|x|)}W(y)dy\right)^{\frac{1}{q}}\left(\int_{B(0,|x|)}U^{1-p'}(y)dy\right)^{\frac{1}{p'}}\right].
\end{equation}
Moreover, the biggest constant $C_{1}(p,q)$ in \eqref{5.2} has the following relation to $A_{1}$:
\begin{equation}\label{C}
A_{1}\geq C_{1}(p,q)\geq \left(\frac{p'}{p'+q}\right)^{-\frac{1}{q}}\left(\frac{q}{p'+q}\right)^{-\frac{1}{p'}}A_{1}.
\end{equation}
Also, inequality
\begin{equation}\label{5.4}
\left[\int_{\mathbb{G}}\left(\int_{\G\setminus B(0,|x|)}f(y)dy\right)^{q}W(x)dx\right]^{\frac{1}{q}}\geq C_{2}(p,q)\left(\int_{\mathbb{G}}f^{p}(x)U(x)dx\right)^{\frac{1}{p}}
\end{equation}
holds for some $C_{2}(p,q)>0$ and all non-negative measurable functions $f,$ if and only if
\begin{equation}\label{5.4.1}
0< A_{2}:=\inf_{x\neq a}\left[\left(\int_{ B(0,|x|)}W(y)dy\right)^{\frac{1}{q}}\left(\int_{\mathbb{G}\setminus B(0,|x|)}U^{1-p'}(y)dy\right)^{\frac{1}{p'}}\right].
\end{equation}
Moreover, the biggest constant $C_{2}(p,q)$ in \eqref{5.4} has the following relation to $A_{2}$:
\begin{equation}\label{C}
A_{2}\geq C_{2}(p,q)\geq \left(\frac{p'}{p'+q}\right)^{-\frac{1}{q}}\left(\frac{q}{p'+q}\right)^{-\frac{1}{p'}}A_{2}.
\end{equation}

\end{thm}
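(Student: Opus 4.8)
The plan is to reduce the group inequality \eqref{5.2} to a one-dimensional reverse Hardy inequality on $(0,\infty)$, exploiting that the inner integral $\int_{B(0,|x|)}f\,dy$ depends on $x$ only through $|x|$. Using the polar decomposition \eqref{EQ:polar} I write $x=r\omega$ with $r=|x|$, $\omega\in\mathfrak{S}$, and set
\[
h(s)=\int_{\mathfrak{S}}f(s\omega)s^{Q-1}\,d\sigma(\omega),\qquad F(r)=\int_{B(0,r)}f\,dy=\int_0^r h(s)\,ds ,
\]
so the left-hand side of \eqref{5.2} becomes $\big(\int_0^\infty F(r)^q\widetilde w(r)\,dr\big)^{1/q}$, where $\widetilde w(r)=r^{Q-1}\int_{\mathfrak{S}}W(r\omega)\,d\sigma(\omega)$ satisfies $\int_r^\infty\widetilde w=\int_{\mathbb{G}\setminus B(0,r)}W$. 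The right-hand side does not reduce directly, so the first key step is the reverse Hölder inequality (Theorem \ref{Hol}) applied on each quasi-sphere $\{|x|=s\}$ via the factorisation $f(s\omega)s^{Q-1}=\big(f(s\omega)^pU(s\omega)s^{Q-1}\big)^{1/p}\big(s^{Q-1}U(s\omega)^{1-p'}\big)^{1/p'}$, which yields
\[
h(s)\ \ge\ H(s)^{1/p}V(s)^{1/p'},\qquad H(s)=\int_{\mathfrak{S}}f^pU\,s^{Q-1}d\sigma,\quad V(s)=\int_{\mathfrak{S}}s^{Q-1}U^{1-p'}d\sigma .
\]
A short exponent computation gives $\int_0^\infty H=\int_{\mathbb{G}}f^pU\,dx$ and $\int_0^r V=\int_{B(0,r)}U^{1-p'}$.

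Since $q<0$, the left-hand side is monotone decreasing in the profile $h$, so replacing $h$ by the smaller $g(s):=H(s)^{1/p}V(s)^{1/p'}$ only increases it; and with $u_0:=V^{1-p}$ one checks $u_0^{1-p'}=V$ and $g^pu_0=H$. Hence \eqref{5.2} reduces to the one-dimensional reverse Hardy inequality
\[
\Big(\int_0^\infty\Big(\int_0^r g\Big)^q\widetilde w\,dr\Big)^{1/q}\ \ge\ C\Big(\int_0^\infty g^pu_0\Big)^{1/p},
\]
whose Muckenhoupt-type quantity $\inf_{r}\big(\int_r^\infty\widetilde w\big)^{1/q}\big(\int_0^r u_0^{1-p'}\big)^{1/p'}$ unwinds to exactly $A_1$ in \eqref{5.2.1}. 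Conversely, testing the group inequality on the reverse-Hölder extremals $f(s\omega)=\psi(s)U(s\omega)^{1-p'}$, which make the spherical reverse Hölder step an \emph{equality} and hence lose no information, reduces the necessity of $A_1>0$ and the bound $A_1\ge C_1$ to the one-dimensional statement.

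It then remains to establish the one-dimensional inequality with the constants in \eqref{C}. For sufficiency I apply reverse Hölder on $(0,r)$, $\int_0^r g\ge\big(\int_0^r g^pu_0\big)^{1/p}\big(\int_0^r u_0^{1-p'}\big)^{1/p'}$, and combine with the bound $\int_r^\infty\widetilde w\le A_1^{q}\big(\int_0^r u_0^{1-p'}\big)^{-q/p'}$ coming from $A_1$, via an integration by parts in $r$; optimising the resulting power-mean estimate is what produces the explicit lower constant $\big(\frac{p'}{p'+q}\big)^{-1/q}\big(\frac{q}{p'+q}\big)^{-1/p'}A_1$. For necessity I take $g=u_0^{1-p'}\mathbf{1}_{(0,r_0)}$, for which both sides are computed explicitly and the inequality forces $C\le\big(\int_0^{r_0}u_0^{1-p'}\big)^{1/p'}\big(\int_{r_0}^\infty\widetilde w\big)^{1/q}$; taking the infimum over $r_0$ gives $C_1\le A_1$. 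Finally the dual inequality \eqref{5.4}, with $B(0,|x|)$ replaced by its complement, follows by the same scheme after the radial inversion $r\mapsto 1/r$, which interchanges $\int_0^r$ with $\int_r^\infty$ and swaps the two cut-off integrals, turning $A_1$ into $A_2$.

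The main obstacle is twofold. The conceptual point is the reduction for general, \emph{non-radial} $f$: one must verify that the per-sphere reverse Hölder step is lossless, i.e.\ that the extremals $f(s\omega)\propto\psi(s)U(s\omega)^{1-p'}$ realise the worst case, so that the full group statement, and not merely its radial restriction, is equivalent to the one-dimensional inequality governed by $A_1$. The technical point is the sufficiency estimate with the sharp constant: one must justify the integration by parts, controlling the boundary behaviour as $r\to0$ and $r\to\infty$ (where $F^q$ with $q<0$ may be singular unless $\widetilde w$ compensates), and carry out the optimisation yielding precisely the constant in \eqref{C}. Throughout, the reversed signs $p'<0$, $q<0$, $p'+q<0$ and the reversed Hölder and power inequalities must be tracked with care.
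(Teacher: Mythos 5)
There is nothing in this paper to compare your argument against: Theorem \ref{integral_hardy} is not proved here, it is imported verbatim from the reference \cite{KRS2} (where it is established in the more general setting of metric measure spaces admitting a polar decomposition), and the present paper only uses it as a black box in the proof of Theorem \ref{stein-weiss3}. So your proposal can only be judged on its own merits.

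On those merits, your reduction scheme is sound and is the natural route. The bookkeeping checks out: with $h(s)=\int_{\mathfrak{S}}f(s\omega)s^{Q-1}d\sigma(\omega)$ the left-hand side of \eqref{5.2} does depend on $f$ only through $F(r)=\int_0^r h$; the per-sphere reverse H\"older step with the factorisation $fs^{Q-1}=(f^pUs^{Q-1})^{1/p}(s^{Q-1}U^{1-p'})^{1/p'}$ is exponent-correct (the powers of $s$ and of $U$ both recombine, since $\frac1p+\frac1{p'}=1$ and $\frac1p+\frac{1-p'}{p'}=0$); the substitution $u_0=V^{1-p}$ indeed gives $g^pu_0=H$ and $u_0^{1-p'}=V$, so the one-dimensional Muckenhoupt quantity unwinds to exactly $A_1$; and your necessity test functions $f\propto\psi(s)U^{1-p'}$ are precisely the equality case of the spherical reverse H\"older, so the "lossless reduction" worry you raise is in fact resolved by your own construction. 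The genuine gap is the one you flag at the end: the one-dimensional reverse Hardy inequality with the explicit constant $\bigl(\frac{p'}{p'+q}\bigr)^{-1/q}\bigl(\frac{q}{p'+q}\bigr)^{-1/p'}A_1$ is the entire analytic content of the theorem, and "integration by parts plus optimisation" is a placeholder, not a proof; with $q<0$ and $p'<0$ the monotonicity and boundary-term arguments of the classical Muckenhoupt proof all reverse, and this is exactly what \cite{KRS2} is devoted to. A minor additional caveat: the reverse H\"older inequality (Theorem \ref{Hol}) requires $0<\int_{\mathfrak{S}}s^{Q-1}U^{1-p'}d\sigma<\infty$ on a.e.\ sphere, so the degenerate cases $V(s)\in\{0,\infty\}$ need a sentence, consistent with the paper's convention $0^q=+\infty$.
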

\begin{rem}
In our sense, the negative exponent $q<0$ of  $0$, we understand in the following form:
\begin{equation*}
0^{q}=(+\infty)^{-q}=+\infty,\,\,\,\,\text{and}\,\,\,\,\,\,0^{-q}=(+\infty)^{q}=0.
\end{equation*}
\end{rem}
Now we formulate the reverse Stein-Weiss inequality on $\mathbb{G}$.
\begin{thm}\label{stein-weiss3}
Let $\mathbb{G}$ be a homogeneous group of homogeneous dimension $Q\geq1$ and let $|\cdot|$ be an arbitrary homogeneous quasi-norm on $\mathbb{G}$.
Let $\lambda>0$, $p,q'\in(0,1)$, $0\leq\alpha<-\frac{Q}{q}$, $0\leq\beta<-\frac{Q}{p'}$,  $\frac{1}{q'}+\frac{1}{p}=\frac{\alpha+\beta+\lambda}{Q}+2$, where $\frac{1}{p}+\frac{1}{p'}=1$ and $\frac{1}{q}+\frac{1}{q'}=1$. Then for all non-negative functions $f\in L^{q'}(\mathbb{G})$ and $h\in L^{p}(\mathbb{G})$ we have
\begin{equation}\label{stein-weiss}
\int_{\mathbb{G}}\int_{\mathbb{G}}|x|^{\alpha}|y^{-1}x|^{\lambda}f(x)h(y)|y|^{\beta}dxdy\geq C\|f\|_{L^{q'}(\mathbb{G})}\|h\|_{L^{p}(\mathbb{G})},
\end{equation}
where $C$ is a positive constant independent of $f$ and $h$.

\end{thm}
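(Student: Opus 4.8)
The plan is to peel off the two factors $\|f\|_{L^{q'}}$ and $\|h\|_{L^{p}}$ one at a time: first by the reverse H\"older inequality (Theorem \ref{Hol}) in the $x$-variable, and then by the reverse integral Hardy inequality (Theorem \ref{integral_hardy}) applied to the remaining local average of $h$. Since all homogeneous quasi-norms are equivalent (Proposition \ref{prop_quasi_norm}) and every weight in \eqref{stein-weiss} is a power of the quasi-norm, changing $|\cdot|$ only affects the final constant $C$; hence I may assume $|\cdot|$ satisfies the triangle inequality \eqref{tri}. Write
\[
g(x):=|x|^{\alpha}\int_{\mathbb G}|y^{-1}x|^{\lambda}|y|^{\beta}h(y)\,dy,
\]
so the left-hand side of \eqref{stein-weiss} is $\int_{\mathbb G}f(x)g(x)\,dx$.

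The first step is a pointwise lower bound for the kernel on a sub-region. If $|y|\le\tfrac12|x|$, then \eqref{tri} gives $|y^{-1}x|\ge|x|-|y|\ge\tfrac12|x|$, so $|y^{-1}x|^{\lambda}\ge 2^{-\lambda}|x|^{\lambda}$ since $\lambda>0$. Discarding the rest of the non-negative integrand,
\[
g(x)\ \ge\ 2^{-\lambda}\,|x|^{\alpha+\lambda}\int_{B(0,|x|/2)}|y|^{\beta}h(y)\,dy.
\]
Applying Theorem \ref{Hol} with exponent $q'\in(0,1)$, whose conjugate is $q<0$, yields $\int_{\mathbb G}fg\ge\|f\|_{L^{q'}}\big(\int_{\mathbb G}g^{q}\,dx\big)^{1/q}$, and because $q<0$ the functional $g\mapsto(\int g^{q})^{1/q}$ is order-preserving, so the lower bound above may be inserted. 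After raising to the power $q$ and integrating, a dilation $x\mapsto 2x$ replaces $B(0,|x|/2)$ by $B(0,|x|)$ up to a positive multiplicative constant.

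It therefore remains to establish
\[
\left(\int_{\mathbb G}\Big(\int_{B(0,|x|)}|y|^{\beta}h(y)\,dy\Big)^{q}|x|^{(\alpha+\lambda)q}\,dx\right)^{1/q}\ \ge\ C\,\|h\|_{L^{p}},
\]
which is precisely \eqref{5.2} applied to the function $y\mapsto|y|^{\beta}h(y)$, with weights $W(x)=|x|^{(\alpha+\lambda)q}$ and $U(y)=|y|^{-\beta p}$, the latter chosen so that the right-hand side of \eqref{5.2} equals $\|h\|_{L^{p}}$. By the polar formula \eqref{EQ:polar}, the integrals $\int_{\mathbb G\setminus B(0,|x|)}W$ and $\int_{B(0,|x|)}U^{1-p'}$ are explicit powers of $|x|$, and a direct computation using $1-p'=\tfrac{1}{1-p}$ and $\tfrac{p}{p'}=p-1$ shows that the total power of $|x|$ in the quantity $A_{1}$ of \eqref{5.2.1} equals $\alpha+\beta+\lambda+Q\big(\tfrac1q+\tfrac1{p'}\big)$. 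This vanishes exactly by the dimensional balance $\tfrac{1}{q'}+\tfrac1p=\tfrac{\alpha+\beta+\lambda}{Q}+2$, while the range conditions $0\le\alpha<-Q/q$ and $0\le\beta<-Q/p'$ guarantee that the two radial integrals converge; hence $0<A_{1}<\infty$ and Theorem \ref{integral_hardy} applies, giving $C>0$.

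I expect the most delicate point to be the kernel lower bound: there is no useful pointwise estimate for $|y^{-1}x|^{\lambda}$ on the whole region $|y|<|x|$, so one is forced to restrict to $|y|\le\tfrac12|x|$ (the symmetric choice $|y|\ge 2|x|$ would instead bound the kernel below by $2^{-\lambda}|y|^{\lambda}$ and lead to \eqref{5.4}) and to discard the complement; checking afterwards that the discarded mass does not destroy the exponent count is exactly what pins down the scaling relation. A secondary technical issue is the hypothesis $0<\int_{\mathbb G}g^{q}<\infty$ required by Theorem \ref{Hol}; this finiteness is not automatic but is supplied a posteriori by the Hardy estimate itself, since that estimate furnishes a positive finite lower bound for $(\int g^{q})^{1/q}$ whenever $h\not\equiv 0$, the case $h\equiv0$ being trivial.
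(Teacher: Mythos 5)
Your proposal is correct and follows the same strategy as the paper's proof: reverse H\"older in $x$ to extract $\|f\|_{L^{q'}}$, a pointwise kernel bound $|y^{-1}x|\geq\tfrac12|x|$ on the region $|y|\leq\tfrac12|x|$, and then the reverse integral Hardy inequality \eqref{5.2} with exactly the weights $W(x)=|x|^{(\alpha+\lambda)q}$, $U(y)=|y|^{-\beta p}$ that the paper uses. The one genuine difference is that you keep only the ``near'' region $B(0,|x|/2)$, whereas the paper also estimates the complementary piece over $\mathbb{G}\setminus B(0,2|x|)$ (via $|y^{-1}x|\geq\tfrac12|y|$ and \eqref{5.4}) and averages the two bounds; your single-region argument suffices under the stated hypotheses because, as the paper's own Theorem \ref{stein-weiss4}(b) records, the condition $0\leq\beta<-\tfrac{Q}{p'}$ alone already makes the $I_1$ estimate work, while the paper's two-sided version is what isolates the roles of the $\alpha$- and $\beta$-conditions in Theorem \ref{stein-weiss4}. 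One small point to make explicit in a full write-up: the convergence of $\int_{\mathbb{G}\setminus B(0,|x|)}|y|^{(\alpha+\lambda)q}\,dy$ at infinity requires $Q+(\alpha+\lambda)q<0$, and this does \emph{not} come from $0\leq\alpha<-\tfrac{Q}{q}$ (which gives $Q+\alpha q>0$); it follows from $\beta<-\tfrac{Q}{p'}$ combined with the scaling relation, exactly as in the paper's verification of \eqref{5.2.1}.
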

\begin{cor}
	By setting $\alpha=\beta=0$ we get the reverse Hardy-Littlewood-Sobolev inequality on the homogeneous groups, in the following form:
\begin{equation}\label{hls}
\int_{\mathbb{G}}\int_{\mathbb{G}}|y^{-1}x|^{\lambda}f(x)h(y)dxdy\geq C\|f\|_{L^{q'}(\mathbb{G})}\|h\|_{L^{p}(\mathbb{G})},
\end{equation}
for all non-negative functions $f\in L^{q'}(\mathbb{G})$ and $h\in L^{p}(\mathbb{G})$ with $\lambda>0$, $p,q'\in(0,1)$, $\frac{1}{q'}+\frac{1}{p}=\frac{\lambda}{Q}+2$, where $\frac{1}{p}+\frac{1}{p'}=1$ and $\frac{1}{q}+\frac{1}{q'}=1$. 
\end{cor}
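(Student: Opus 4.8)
The corollary is simply the $\alpha=\beta=0$ specialisation of Theorem~\ref{stein-weiss3}: the balance condition $\frac{1}{q'}+\frac{1}{p}=\frac{\alpha+\beta+\lambda}{Q}+2$ collapses to $\frac{1}{q'}+\frac{1}{p}=\frac{\lambda}{Q}+2$, while the weight restrictions $0\le\alpha<-\frac{Q}{q}$ and $0\le\beta<-\frac{Q}{p'}$ become the automatically valid inequalities $0<-\frac{Q}{q}$ and $0<-\frac{Q}{p'}$ (both hold since $q,p'<0$). Thus it suffices to indicate the mechanism behind Theorem~\ref{stein-weiss3} in this unweighted case, the general weighted statement being proved by the identical scheme.

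The plan is to reduce the double integral in \eqref{hls} to a single integral to which the reverse integral Hardy inequality (Theorem~\ref{integral_hardy}) applies. First I would decompose $\mathbb{G}\times\mathbb{G}$ and keep only the region $\{|y|\le\frac{1}{2}|x|\}$, discarding the rest (legitimate since $f,h\ge0$). There, taking the quasi-norm to satisfy the triangle inequality \eqref{tri} by Proposition~\ref{prop_quasi_norm}, the reverse triangle inequality gives $|y^{-1}x|\ge|x|-|y|\ge\frac{1}{2}|x|$, hence $|y^{-1}x|^{\lambda}\ge 2^{-\lambda}|x|^{\lambda}$, so that
\begin{equation*}
\int_{\mathbb{G}}\int_{\mathbb{G}}|y^{-1}x|^{\lambda}f(x)h(y)\,dx\,dy\ge 2^{-\lambda}\int_{\mathbb{G}}|x|^{\lambda}f(x)\left(\int_{B(0,|x|/2)}h(y)\,dy\right)dx.
\end{equation*}
Next I would apply the reverse Hölder inequality (Theorem~\ref{Hol}) in the $x$-variable with the pair $(q',q)$, where $q'\in(0,1)$ and its conjugate $q<0$; this pulls out the factor $\|f\|_{L^{q'}(\mathbb{G})}$ and leaves
\begin{equation*}
2^{-\lambda}\|f\|_{L^{q'}(\mathbb{G})}\left(\int_{\mathbb{G}}\left(\int_{B(0,|x|/2)}h(y)\,dy\right)^{q}|x|^{\lambda q}\,dx\right)^{\frac{1}{q}}.
\end{equation*}

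It remains to bound the last factor below by a constant times $\|h\|_{L^{p}(\mathbb{G})}$, and here the expression is exactly the left-hand side of \eqref{5.2} with weight $W(x)=|x|^{\lambda q}$ and $U\equiv1$ (the harmless passage from $B(0,|x|/2)$ to the ball $B(0,|x|)$ of \eqref{5.2} is absorbed by the dilation $x\mapsto 2x$, whose Jacobian is $2^{Q}$, producing only an explicit multiplicative constant). Applying Theorem~\ref{integral_hardy} then yields the bound $\ge C_{1}(p,q)\,\|h\|_{L^{p}(\mathbb{G})}$, provided the admissibility condition $A_{1}>0$ of \eqref{5.2.1} holds, and the main obstacle is precisely this verification. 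Using the polarisation formula \eqref{EQ:polar}, both $\int_{\mathbb{G}\setminus B(0,R)}W$ and $\int_{B(0,R)}U^{1-p'}$ are explicit powers of $R$, so their product in \eqref{5.2.1} is a single power $R^{\lambda+\frac{Q}{q}+\frac{Q}{p'}}$; its infimum over $R\in(0,\infty)$ is a strictly positive constant exactly when the exponent vanishes, i.e. when $\lambda+\frac{Q}{q}+\frac{Q}{p'}=0$, which upon inserting $\frac{1}{q}=1-\frac{1}{q'}$ and $\frac{1}{p'}=1-\frac{1}{p}$ is identical to the hypothesis $\frac{1}{q'}+\frac{1}{p}=\frac{\lambda}{Q}+2$. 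The same homogeneity forces the convergence of the two integrals (at infinity and at the origin respectively), so no extra restriction is needed in the unweighted case, and combining the three estimates gives \eqref{hls} with an explicit positive constant $C$. In the weighted case of Theorem~\ref{stein-weiss3} one runs the identical argument with $W(x)=|x|^{(\alpha+\lambda)q}$ and $U(x)=|x|^{-\beta p}$; then the convergence of $\int_{B(0,R)}U^{1-p'}$ at the origin is what uses $\beta<-\frac{Q}{p'}$, while the symmetric region $\{|y|\ge 2|x|\}$ accounts for the condition $\alpha<-\frac{Q}{q}$.
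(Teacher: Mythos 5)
Your proposal is correct and matches the paper: the corollary is indeed the immediate specialisation $\alpha=\beta=0$ of Theorem~\ref{stein-weiss3} (the paper gives no separate proof), and your check that $0<-\frac{Q}{q}$ and $0<-\frac{Q}{p'}$ hold automatically because $q,p'<0$ is exactly the point. Your sketch of the underlying mechanism — reverse H\"older to extract $\|f\|_{L^{q'}(\mathbb{G})}$, restriction to the region $|y|\leq\frac{|x|}{2}$ where $|y^{-1}x|^{\lambda}\geq 2^{-\lambda}|x|^{\lambda}$, and the reverse integral Hardy inequality \eqref{5.2} with the scaling condition forcing the exponent in $A_{1}$ to vanish — reproduces the paper's Step~1 (the $I_{1}$ estimate, which alone suffices here, cf.\ Theorem~\ref{stein-weiss4}\,(b)), differing only in the harmless reordering of the H\"older and restriction steps.
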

\begin{rem}
In the Abelian (Euclidean) case ${\mathbb G}=(\mathbb R^{N},+)$, hence $Q=N$ and $|\cdot|$ can be any homogeneous quasi-norm  on $\mathbb R^{N}$, in particular with the usual Euclidean distance, i.e. $|\cdot|=\|\cdot\|_{E}$, this was investigated in \cite{CLT}.
\end{rem}
\begin{proof}[Proof of Theorem \ref{stein-weiss3}]
By using reverse H\"{o}lder's inequality with $\frac{1}{q}+\frac{1}{q'}=1$ (Theorem \ref{Hol}) in \eqref{stein-weiss}, we calculate,
\begin{equation*}
\begin{split}
\int_{\G}\int_{\G}|x|^{\alpha}f(x)|y^{-1}x|^{\lambda}&h(y)|y|^{\beta}dydx=\int_{\G}\left(\int_{\G}|x|^{\alpha}|y^{-1}x|^{\lambda}h(y)|y|^{\beta}dy\right)f(x)dx\\&
\stackrel{\eqref{Holin}}\geq\left(\int_{\mathbb{G}}\left(\int_{\G}|x|^{\alpha}|y^{-1}x|^{\lambda}h(y)|y|^{\beta}dy\right)^{q}dx\right)^{\frac{1}{q}}\|f\|_{L^{q'}(\G)}.
\end{split}
\end{equation*}
So for \eqref{stein-weiss}, it is enough to show that
\begin{equation*}
\left(\int_{\mathbb{G}}\left(\int_{\G}|x|^{\alpha}|y^{-1}x|^{\lambda}h(y)|y|^{\beta}dy\right)^{q}dx\right)^{\frac{1}{q}}\geq C\|h\|_{L^{p}(\mathbb{G})},
\end{equation*}
and by changing $u(y)=h(y)|y|^{\beta}$, this is equivalent to
\begin{equation*}
\int_{\mathbb{G}}\left(\int_{\mathbb{G}}|x|^{\alpha}|y^{-1} x|^{\lambda}u(y)dy\right)^{q}dx\leq C\||y|^{-\beta}u\|^{q}_{L^{p}(\mathbb{G})}.
\end{equation*}
We have that
\begin{equation*}
\int_{\mathbb{G}}|x|^{\alpha}|y^{-1} x|^{\lambda}u(y)dy\geq\int_{B\left(0,\frac{|x|}{2}\right)}|x|^{\alpha}|y^{-1} x|^{\lambda}u(y)dy,
\end{equation*}
then
\begin{equation*}
\left(\int_{\mathbb{G}}|x|^{\alpha}|y^{-1} x|^{\lambda}u(y)dy\right)^{q}\stackrel{q<0}\leq\left(\int_{B\left(0,\frac{|x|}{2}\right)}|x|^{\alpha}|y^{-1} x|^{\lambda}u(y)dy\right)^{q}.
\end{equation*}
Therefore, we obtain
\begin{multline}\label{i1}
\left(\int_{\mathbb{G}}|x|^{\alpha q}\left(\int_{\mathbb{G}}|y^{-1} x|^{\lambda}u(y)dy\right)^{q}dx\right)^{\frac{1}{q}}\\
\stackrel{q<0}\geq\left(\int_{\mathbb{G}}|x|^{\alpha q}\left(\int_{B\left(0,\frac{|x|}{2}\right)}|y^{-1} x|^{\lambda}u(y)dy\right)^{q}dx\right)^{\frac{1}{q}}:=I^{\frac{1}{q}}_{1}.
\end{multline}
Similarly with \eqref{i1}, we have

\begin{multline}\label{i3}
\left(\int_{\mathbb{G}}|x|^{\alpha q}\left(\int_{\mathbb{G}}|y^{-1} x|^{\lambda}u(y)dy\right)^{q}dx\right)^{\frac{1}{q}}\\
\stackrel{q<0}\geq\left(\int_{\mathbb{G}}|x|^{\alpha q}\left(\int_{\mathbb{\G}\setminus B(0,2|x|)}|y^{-1} x|^{\lambda}u(y)dy\right)^{q}dx\right)^{\frac{1}{q}}:=I^{\frac{1}{q}}_{2}.
\end{multline}
By summarising above facts, from \eqref{i1}-\eqref{i3}, we have
\begin{equation}\label{i13}
\left(\int_{\mathbb{G}}|x|^{\alpha q}\left(\int_{\mathbb{G}}|y^{-1} x|^{\lambda}u(y)dy\right)^{q}dx\right)^{\frac{1}{q}}\geq \frac{I^{\frac{1}{q}}_{1}}{2}+\frac{I^{\frac{1}{q}}_{2}}{2}.
\end{equation}

From now on, in view of Proposition \ref{prop_quasi_norm} we can assume that our quasi-norm is actually a norm.

\textbf{Step 1.} Let us consider $I_{1}$.   By using Proposition \ref{prop_quasi_norm} and the properties of the quasi-norm with  $|y|\leq\frac{|x|}{2}$, we get
\begin{equation}
|x|=|x^{-1}|=|x^{-1}y y^{-1}|\stackrel{\eqref{tri}}\leq |x^{-1} y|+|y^{-1}|=|y^{-1} x|+|y|\leq |y^{-1} x|+\frac{|x|}{2}.
\end{equation}
Then for any $\lambda>0$, we have
$$2^{-\lambda}|x|^{\lambda}\leq |y^{-1} x|^{\lambda}.$$
It means,
$$2^{-\lambda}\int_{B\left(0,\frac{|x|}{2}\right)}|x|^{\lambda}u(y)dy\leq\int_{B\left(0,\frac{|x|}{2}\right)}|y^{-1}x|^{\lambda}u(y)dy,$$
so that
$$\left(\int_{B\left(0,\frac{|x|}{2}\right)}|y^{-1}x|^{\lambda}u(y)dy\right)^{q}\leq 2^{-\lambda q}\left(\int_{B\left(0,\frac{|x|}{2}\right)}|x|^{\lambda}u(y)dy\right)^{q}.$$
Therefore, we get
\begin{equation*}
\begin{split}
I_{1}&=\int_{\mathbb{G}}|x|^{\alpha q}\left(\int_{B\left(0,\frac{|x|}{2}\right)}|y^{-1} x|^{\lambda}u(y)dy\right)^{q}dx\\&
\leq 2^{-\lambda q}\int_{\mathbb{G}}|x|^{(\alpha+\lambda)q}\left(\int_{B\left(0,\frac{|x|}{2}\right)}u(y)dy\right)^{q}dx.
\end{split}
\end{equation*}
If condition \eqref{5.2.1} in Theorem \ref{integral_hardy} with $W(x)=|x|^{(\alpha+\lambda)q}$ and $U(y)=|y|^{-\beta p}$ in \eqref{5.2} is satisfied, then we have
\begin{equation*}
I_{1}\leq2^{-\lambda q}\int_{\mathbb{G}}\left(\int_{B(0,\frac{|x|}{2})}u(y)dy\right)^{q}|x|^{(\alpha+\lambda)q}dx
\leq C_{1}\||y|^{-\beta}u\|^{q}_{L^{p}(\mathbb{G})}.
\end{equation*}
Let us verify condition \eqref{5.2.1}. By using assumption $\beta<-\frac{Q}{p'}$, we obtain
$$\frac{1}{p}+\frac{1}{q'}=\frac{\alpha+\beta+\lambda}{Q}+2<\frac{\alpha+\lambda}{Q}-\frac{1}{p'}+2,$$
that is, $\frac{Q+(\alpha+\lambda)q}{Qq}>0,$ then $Q+(\alpha+\lambda)q<0$ and by  using the polar decomposition \eqref{EQ:polar}:
\begin{equation*}
\begin{split}
\left(\int_{\mathbb{G}\setminus B(0,|x|)}W(y)dy\right)^{\frac{1}{q}}&=
\left(\int_{\mathbb{G}\setminus B(0,|x|)}|y|^{(\alpha+\lambda)q}dy\right)^{\frac{1}{q}}\\&
=\left(\int_{|x|}^{\infty}\int_{\mathfrak{S}}r^{Q-1}r^{(\alpha+\lambda)q}drd\sigma(\omega)\right)^{\frac{1}{q}}\\&
=\left(|\mathfrak{S}|\int_{|x|}^{\infty}r^{Q-1+(\alpha+\lambda)q}dr\right)^{\frac{1}{q}}\\&
=\left(-\frac{|\mathfrak{S}|}{Q+(\alpha+\lambda)q}|x|^{Q+(\alpha+\lambda)q}\right)^{\frac{1}{q}}\\&
=\left(\frac{|\mathfrak{S}|}{|Q+(\alpha+\lambda)q|}\right)^{\frac{1}{q}} |x|^{\frac{Q+(\alpha+\lambda)q}{q}}.
\end{split}
\end{equation*}
Since $\beta<-\frac{Q}{p'}$, we have
$$-\beta p(1-p')+Q>-\beta p(1-p')-\beta p'=0 .$$
So, $-\beta p(1-p')+Q>0$. Then, let us consider
\begin{equation}
\begin{split}
\left(\int_{ B(0,|x|)}U^{1-p'}(y)dy\right)^{\frac{1}{p'}}&=\left(\int_{ B(0,|x|)}|y|^{-\beta p(1-p')}dy\right)^{\frac{1}{p'}}\\&
=\left(\int^{|x|}_{0}\int_{\mathfrak{S}}r^{-\beta p(1-p')}r^{Q-1}drd\sigma(\omega)\right)^{\frac{1}{p'}}
\\&
=\left(|\mathfrak{S}|\int^{|x|}_{0}r^{-\beta p(1-p')+Q-1}dr\right)^{\frac{1}{p'}}\\&
=\left(\frac{|\mathfrak{S}|}{-\beta p(1-p')+Q}|x|^{-\beta p(1-p')+Q}\right)^{\frac{1}{p'}}\\&
= \left(\frac{|\mathfrak{S}|}{Q-\beta p(1-p')}\right)^{\frac{1}{p'}} |x|^{\frac{-\beta p(1- p')+Q}{p'}}.
\end{split}
\end{equation}
Therefore, we have
\begin{equation}\label{a1}
\begin{split}
A_{1}&=\inf_{x\neq a}\left(\int_{\mathbb{G}\setminus B(0,|x|)}W(y)dx\right)^{\frac{1}{q}}\left(\int_{ B(0,|x|)}U^{1-p'}(y)dy\right)^{\frac{1}{p'}}\\&
= \left(\frac{|\mathfrak{S}|}{|Q+(\alpha+\lambda)q|}\right)^{\frac{1}{q}}\left(\frac{|\mathfrak{S}|}{Q-\beta p(1-p')}\right)^{\frac{1}{p'}} \inf_{x\neq a} |x|^{\frac{(\alpha+\lambda)q+Q}{q}+\frac{-\beta p(1- p')+Q}{p'}}\\&
= \left(\frac{|\mathfrak{S}|}{|Q+(\alpha+\lambda)q|}\right)^{\frac{1}{q}}\left(\frac{|\mathfrak{S}|}{Q-\beta p(1-p')}\right)^{\frac{1}{p'}}\inf_{x\neq a}|x|^{Q\left(\frac{1}{q}+\frac{1}{p'}+\frac{\alpha+\beta+\lambda}{Q}\right)}\\&
=\left(\frac{|\mathfrak{S}|}{|Q+(\alpha+\lambda)q|}\right)^{\frac{1}{q}}\left(\frac{|\mathfrak{S}|}{Q-\beta p(1-p')}\right)^{\frac{1}{p'}}\inf_{x\neq a}|x|^{Q\left(2-\frac{1}{q'}-\frac{1}{p}+\frac{\alpha+\beta+\lambda}{Q}\right)}\\&
=\left(\frac{|\mathfrak{S}|}{|Q+(\alpha+\lambda)q|}\right)^{\frac{1}{q}}\left(\frac{|\mathfrak{S}|}{Q-\beta p(1-p')}\right)^{\frac{1}{p'}}>0.
\end{split}
\end{equation}
Then by using \eqref{5.2}, we obtain
\begin{equation}
I_{1}\leq 2^{-\lambda q}\int_{\mathbb{G}}|x|^{(\alpha+\lambda)q}\left(\int_{B\left(0,\frac{|x|}{2}\right)}u(y)dy\right)^{q}dx
\leq 2^{-\lambda q}C^{q}_{1}\||y|^{-\beta}u\|^{q}_{L^{p}(\mathbb{G})},
\end{equation}
so that
\begin{equation}\label{I1}
I^{\frac{1}{q}}_{1}\geq  2^{-\lambda }C_{1}\||y|^{-\beta}u\|_{L^{p}(\mathbb{G})}=2^{-\lambda }C_{1}\|h\|_{L^{p}(\mathbb{G})}.
\end{equation}

\textbf{Step 2.} As in the previous case $I_{1}$, now we consider $I_{2}$. From $2|x|\leq |y|$,  we calculate
$$|y|=|y^{-1}|=|y^{-1} x x^{-1}|\stackrel{\eqref{tri}}\leq |y^{-1} x|+|x|\leq |y^{-1} x|+\frac{|y|}{2},$$
that is,
$$\frac{|y|}{2}\leq |y^{-1} x|.$$
Then, if condition \eqref{5.4.1} with $W(x)=|x|^{\alpha q}$ and $U(y)=|y|^{-(\beta+\lambda)p}$ is satisfied, then we have
\begin{multline*}
I_{2}=\int_{\mathbb{G}}\left(\int_{\mathbb{G}\setminus B(0,2|x|)}|x|^{\alpha}|y^{-1} x|^{\lambda}u(y)dy\right)^{q}dx\\
\leq 2^{-\lambda q}\int_{\mathbb{G}}|x|^{\alpha q}\left(\int_{\mathbb{G}\setminus B(0,2|x|)}u(y)|y|^{\lambda}dy\right)^{q}dx
\leq 2^{-\lambda q}\||y|^{-\beta}u\|^{q}_{L^{p}(\mathbb{G})}.
\end{multline*}
Now let us check condition \eqref{5.4.1}. We have
\begin{equation*}
\begin{split}
\left(\int_{ B(0,|x|)}W(y)dy\right)^{\frac{1}{q}}&=\left(\int_{ B(0,|x|)}|y|^{\alpha q}dy\right)^{\frac{1}{q}}\\&
=\left(\int_{0}^{|x|}\int_{\mathfrak{S}}r^{\alpha q}r^{Q-1}drd\sigma(\omega)\right)^{\frac{1}{q}}\\&
= \left(\frac{|\mathfrak{S}|}{Q+\alpha q}\right)^{\frac{1}{q}} |x|^{\frac{Q+\alpha q}{q}},
\end{split}
\end{equation*}
where $Q+\alpha q>0$. From $\alpha<-\frac{Q}{q}$, we have
\begin{equation*}
\begin{split}
\frac{1}{q'}+\frac{1}{p}=\frac{\alpha+\beta+\lambda}{Q}+2<-\frac{1}{q}+\frac{\beta+\lambda}{Q}+2=\frac{\beta+\lambda}{Q}+1+\frac{1}{q'},
\end{split}
\end{equation*}
then
\begin{equation}\label{betalambda}
(\beta+\lambda)p'+Q<0.
\end{equation}
By using this fact, we have
\begin{equation*}
\begin{split}
\left(\int_{\mathbb{G}\setminus B(0,|x|)}U^{1-p'}(y)dy\right)^{\frac{1}{p'}}&=\left(\int_{\mathbb{G}\setminus B(0,|x|)}|y|^{-(\beta+\lambda)(1-p')p}dy\right)^{\frac{1}{p'}}\\&
=\left(\int_{|x|}^{\infty}\int_{\mathfrak{S}}r^{Q-1}r^{-(\beta+\lambda)(1-p')p}drd\sigma(\omega)\right)^{\frac{1}{p'}}\\&
=\left(|\mathfrak{S}|\int_{|x|}^{\infty}r^{-(\beta+\lambda)(1-p')p+Q-1}dr\right)^{\frac{1}{p'}}\\&
\stackrel{\eqref{betalambda}}=\left(-\frac{|\mathfrak{S}|}{Q-(\beta+\lambda)(1-p')p}|x|^{Q-(\beta+\lambda)(1-p')p}\right)^{\frac{1}{p'}}\\&
=\left(-\frac{|\mathfrak{S}|}{Q+(\beta+\lambda)p'}|x|^{Q+(\beta+\lambda)p'}\right)^{\frac{1}{p'}}\\&
= \left(\frac{|\mathfrak{S}|}{|Q+(\beta+\lambda)p'|}\right)^{\frac{1}{p'}} |x|^{\frac{Q+(\beta+\lambda)p'}{p'}}.
\end{split}
\end{equation*}
Combining these facts we have
\begin{equation}\label{a2}
\begin{split}
A_{2}&=\inf_{x\neq a}\left(\int_{ B(0,|x|)}W(y)dx\right)^{\frac{1}{q}}\left(\int_{\mathbb{G}\setminus B(0,|x|)}U^{1-p'}(y)dx\right)^{\frac{1}{p'}}\\&
= \left(\frac{|\mathfrak{S}|}{Q+\alpha q}\right)^{\frac{1}{q}}\left(\frac{|\mathfrak{S}|}{|Q+(\beta+\lambda)p'|}\right)^{\frac{1}{p'}}\inf_{x\neq a} |x|^{\frac{Q+\alpha q}{q}+\frac{Q+(\beta+\lambda)p'}{p'}}\\&
=\left(\frac{|\mathfrak{S}|}{Q+\alpha q}\right)^{\frac{1}{q}}\left(\frac{|\mathfrak{S}|}{|Q+(\beta+\lambda)p'|}\right)^{\frac{1}{p'}}\inf_{x\neq a} |x|^{\frac{Q}{q}+\alpha+\frac{Q}{p'}+\beta+\lambda}\\&
=\left(\frac{|\mathfrak{S}|}{Q+\alpha q}\right)^{\frac{1}{q}}\left(\frac{|\mathfrak{S}|}{|Q+(\beta+\lambda)p'|}\right)^{\frac{1}{p'}}\inf_{x\neq a} |x|^{Q\left(\frac{1}{q}+\frac{1}{p'}+\frac{\alpha+\beta+\lambda}{Q}\right)}\\&
=\left(\frac{|\mathfrak{S}|}{Q+\alpha q}\right)^{\frac{1}{q}}\left(\frac{|\mathfrak{S}|}{|Q+(\beta+\lambda)p'|}\right)^{\frac{1}{p'}}\inf_{x\neq a} |x|^{Q\left(2-\frac{1}{q'}-\frac{1}{p}+\frac{\alpha+\beta+\lambda}{Q}\right)}\\&
=\left(\frac{|\mathfrak{S}|}{Q+\alpha q}\right)^{\frac{1}{q}}\left(\frac{|\mathfrak{S}|}{|Q+(\beta+\lambda)p'|}\right)^{\frac{1}{p'}}>0.
\end{split}
\end{equation}
Therefore, we have
\begin{equation*}
I_{2}=\int_{\mathbb{G}}\left(\int_{\mathbb{G}\setminus B(0,2|x|)}|x|^{\alpha}u(y)|y^{-1} x|^{\lambda}dy\right)^{q}dx\leq 2^{-\lambda q}C^{q}_{2}\||y|^{-\beta}u\|^{q}_{L^{p}(\mathbb{G})}.
\end{equation*}
Then, we have
\begin{equation}\label{I3}
I^{\frac{1}{q}}_{2}\geq 2^{-\lambda }C_{2}\||y|^{-\beta}u\|_{L^{p}(\mathbb{G})}=2^{-\lambda }C_{2}\|h\|_{L^{p}(\mathbb{G})}.
\end{equation}
Finally, by using \eqref{I1} and \eqref{I3} in \eqref{i13}, we obtain
\begin{equation}
\begin{split}
\left(\int_{\mathbb{G}}|x|^{\alpha q}\left(\int_{\mathbb{G}}|y^{-1} x|^{\lambda}u(y)dy\right)^{q}dx\right)^{\frac{1}{q}}&
\geq\frac{I^{\frac{1}{q}}_{1}}{2}+\frac{I^{\frac{1}{q}}_{2}}{2}\\&
\geq \frac{2^{-\lambda}(C_{1}+C_{2})}{2}\||y|^{-\beta}u\|_{L^{p}(\mathbb{G})}\\&
= \frac{2^{-\lambda }(C_{1}+C_{2})}{2}\||y|^{-\beta}u\|_{L^{p}(\mathbb{G})}\\&
=C_{3}\||y|^{-\beta}u\|_{L^{p}(\mathbb{G})},
\end{split}
\end{equation}
where $C_{3}=\frac{2^{-\lambda}(C_{1}+C_{2})}{2}>0$.

Theorem \ref{stein-weiss3} is proved.
\end{proof}
Let us give improved reverse Stein-Weiss inequality.
\begin{thm}\label{stein-weiss4}
Let $\mathbb{G}$ be a homogeneous group of homogeneous dimension $Q\geq1$ and let $|\cdot|$ be an arbitrary homogeneous quasi-norm on $\mathbb{G}$.
Let $\lambda>0$, $p,q'\in(0,1)$ and $\frac{1}{q'}+\frac{1}{p}=\frac{\alpha+\beta+\lambda}{Q}+2$, where $\frac{1}{p}+\frac{1}{p'}=1$  , $\frac{1}{q}+\frac{1}{q'}=1$. Then for all non-negative functions $f\in L^{q'}(\mathbb{G})$ and $h\in L^{p}(\mathbb{G})$, inequality \eqref{stein-weiss} holds, that is,
\begin{equation*}
\int_{\mathbb{G}}\int_{\mathbb{G}}|x|^{\alpha}|y^{-1}x|^{\lambda}f(x)h(y)|y|^{\beta}dxdy\geq C\|f\|_{L^{q'}(\mathbb{G})}\|h\|_{L^{p}(\mathbb{G})},
\end{equation*}
if  one of the following conditions is satisfied:
\begin{itemize}
	\item[(a)] $0\leq\alpha<-\frac{Q}{q}$. 

	\item[(b)] $0\leq\beta<-\frac{Q}{p'}$.

\end{itemize}
\end{thm}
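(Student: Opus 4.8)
The plan is to reuse the proof of Theorem~\ref{stein-weiss3} almost verbatim, observing that it in fact establishes two \emph{independent} lower bounds, each of which requires only one of the two hypotheses. First I would apply the reverse H\"older inequality (Theorem~\ref{Hol}) with $\frac{1}{q}+\frac{1}{q'}=1$ and the substitution $u(y)=h(y)|y|^{\beta}$ to reduce \eqref{stein-weiss} to
\begin{equation*}
\int_{\mathbb{G}}\left(\int_{\mathbb{G}}|x|^{\alpha}|y^{-1}x|^{\lambda}u(y)\,dy\right)^{q}dx\leq C\||y|^{-\beta}u\|^{q}_{L^{p}(\mathbb{G})},
\end{equation*}
exactly as before. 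Then, since $q<0$, restricting the inner integral to $B(0,|x|/2)$ and to $\mathbb{G}\setminus B(0,2|x|)$ yields the two bounds \eqref{i1} and \eqref{i3}, so that the left-hand side of the reduced inequality (raised to the power $1/q$) is bounded below \emph{separately} by $I_{1}^{1/q}$ and by $I_{2}^{1/q}$.

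The key point is that these two bounds decouple the hypotheses. Step~1 of the proof of Theorem~\ref{stein-weiss3}, which estimates $I_{1}$ by applying the reverse integral Hardy inequality \eqref{5.2} with $W(x)=|x|^{(\alpha+\lambda)q}$ and $U(y)=|y|^{-\beta p}$, uses \emph{only} the assumption $0\le\beta<-\frac{Q}{p'}$: it is this that forces both $Q+(\alpha+\lambda)q<0$ and $Q-\beta p(1-p')>0$, i.e.\ the admissibility \eqref{5.2.1} with $A_{1}>0$ as in \eqref{a1}. Symmetrically, Step~2, which estimates $I_{2}$ via \eqref{5.4} with $W(x)=|x|^{\alpha q}$ and $U(y)=|y|^{-(\beta+\lambda)p}$, uses \emph{only} $0\le\alpha<-\frac{Q}{q}$, which forces $Q+\alpha q>0$ and $(\beta+\lambda)p'+Q<0$, giving $A_{2}>0$ as in \eqref{a2}. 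In Theorem~\ref{stein-weiss3} both hypotheses are present so that the averaged estimate \eqref{i13} may be used; here I simply drop the averaging step and retain the single bound that is available.

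I would therefore argue by cases. If (a) holds, I keep only $I_{2}^{1/q}\ge 2^{-\lambda}C_{2}\|h\|_{L^{p}(\mathbb{G})}$ from Step~2; if (b) holds, I keep only $I_{1}^{1/q}\ge 2^{-\lambda}C_{1}\|h\|_{L^{p}(\mathbb{G})}$ from Step~1. In either case the reduced inequality holds with $C=2^{-\lambda}C_{2}$ (resp.\ $C=2^{-\lambda}C_{1}$), and \eqref{stein-weiss} follows. The mild subtlety to verify is that the balance relation $\frac{1}{q'}+\frac{1}{p}=\frac{\alpha+\beta+\lambda}{Q}+2$ already makes the power of $|x|$ in $A_{2}$ (resp.\ $A_{1}$) vanish, so the admissibility condition reduces to the convergence of two radial integrals, and each of those convergences is controlled by the single retained hypothesis, the other weight exponent being unconstrained beyond the balance relation. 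This is precisely the computation \eqref{a2} (resp.\ \eqref{a1}) of the proof of Theorem~\ref{stein-weiss3}, reproduced unchanged; I do not anticipate any genuine obstacle beyond this bookkeeping.
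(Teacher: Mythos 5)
Your proposal is correct and coincides with the paper's own proof of Theorem \ref{stein-weiss4}: the authors likewise observe that the left-hand side dominates $I_{2}^{1/q}$ alone (for case (a)) or $I_{1}^{1/q}$ alone (for case (b)), and that the verification of \eqref{5.4.1} in Step~2 uses only $0\le\alpha<-\frac{Q}{q}$ while that of \eqref{5.2.1} in Step~1 uses only $0\le\beta<-\frac{Q}{p'}$, the balance relation making the exponent of $|x|$ in $A_1$ and $A_2$ vanish. Your bookkeeping of which hypothesis controls which convergence is exactly the paper's.
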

\begin{proof}
Let us prove (a). By using some notations from proof of  Theorem \ref{stein-weiss3} and \eqref{i13}, we obtain
\begin{equation}
\left(\int_{\mathbb{G}}|x|^{\alpha q}\left(\int_{\mathbb{G}}|y^{-1} x|^{\lambda}u(y)dy\right)^{q}dx\right)^{\frac{1}{q}}\geq I^{\frac{1}{q}}_{2},
\end{equation}
and by using Step 2 in the proof of Theorem \ref{stein-weiss3}  and from \eqref{I3}, we have $I_{2}^{\frac{1}{q}}\geq C\||y|^{-\beta}u\|_{L^{p}(\mathbb{G})},$ then we get
\begin{equation}
\left(\int_{\mathbb{G}}|x|^{\alpha q}\left(\int_{\mathbb{G}}|y^{-1} x|^{\lambda}u(y)dy\right)^{q}dx\right)^{\frac{1}{q}}\geq I^{\frac{1}{q}}_{2}\geq C\||y|^{-\beta}u\|_{L^{p}(\mathbb{G})}.
\end{equation}

Let us prove (b). By \eqref{i13}, we obtain
\begin{equation}
\left(\int_{\mathbb{G}}|x|^{\alpha q}\left(\int_{\mathbb{G}}|y^{-1} x|^{\lambda}u(y)dy\right)^{q}dx\right)^{\frac{1}{q}}\geq I^{\frac{1}{q}}_{1},
\end{equation}
and by using Step 1 in the proof of Theorem \ref{stein-weiss3} and from \eqref{i1}, we have $I_{1}^{\frac{1}{q}}\geq C\||y|^{-\beta}u\|_{L^{p}(\mathbb{G})},$ then we get
\begin{equation}
\left(\int_{\mathbb{G}}|x|^{\alpha q}\left(\int_{\mathbb{G}}|y^{-1} x|^{\lambda}u(y)dy\right)^{q}dx\right)^{\frac{1}{q}}\geq I^{\frac{1}{q}}_{1}\geq C\||y|^{-\beta}u\|_{L^{p}(\mathbb{G})}.
\end{equation}

\end{proof}

Let us give reverse Hardy, $L^{p}$-Sobolev and $L^{p}$-Caffarelli-Kohn-Nirenberg inequalities on $\G$.
Assume now that  $f$ is a radially decreasing function, i.e., $\R f:=\frac{d}{d|x|}f<0$. Let us give the reverse Hardy inequality on homogeneous Lie groups, the reverse to Theorem \ref{dirHardy}.
\begin{thm}[Reverse Hardy inequality]
Let $\G$ be a homogeneous Lie group with homogeneous dimension $Q\geq1$. Assume that $p\in(0,1)$. Then for any non-negative, real-valued and radially decreasing function $f\in C^{\infty}_{0}(\G\setminus\{0\})$, we have
\begin{equation}\label{revhar}
\left\|\frac{f}{|x|}\right\|_{L^{p}(\G)}\geq \frac{p}{Q-p}\|\R f\|_{L^{p}(\G)}.
\end{equation}
\end{thm}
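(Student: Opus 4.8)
The plan is to reduce \eqref{revhar} to a one–dimensional weighted inequality along each ray through the origin, and then to run the same integration-by-parts-plus-Hölder scheme that proves the direct Hardy inequality of Theorem \ref{dirHardy}, but with every inequality reversed by invoking the reverse Hölder inequality (Theorem \ref{Hol}) in place of the ordinary one. Using the polarisation formula \eqref{EQ:polar}, for a fixed $y\in\S$ I set $g(r):=f(ry)$, so that $\R f(ry)=g'(r)$ and
\[
\left\|\frac{f}{|x|}\right\|_{L^{p}(\G)}^{p}=\int_{\S}\int_{0}^{\infty} g(r)^{p} r^{Q-1-p}\,dr\,d\sigma(y),\qquad \|\R f\|_{L^{p}(\G)}^{p}=\int_{\S}\int_{0}^{\infty} |g'(r)|^{p} r^{Q-1}\,dr\,d\sigma(y).
\]
It then suffices to prove, for each direction $y$, the scalar inequality $J\ge \left(\frac{p}{Q-p}\right)^{p} A$, where $J:=\int_{0}^{\infty} g^{p} r^{Q-1-p}\,dr$ and $A:=\int_{0}^{\infty} |g'|^{p} r^{Q-1}\,dr$, and then integrate over $\S$.

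Since $Q\ge 1>p$ gives $Q-p>0$, I write $r^{Q-1-p}=\frac{1}{Q-p}\frac{d}{dr}r^{Q-p}$ and integrate by parts:
\[
J=\left[\frac{g^{p} r^{Q-p}}{Q-p}\right]_{0}^{\infty}-\frac{p}{Q-p}\int_{0}^{\infty} g^{p-1}g'\, r^{Q-p}\,dr=\frac{p}{Q-p}\int_{0}^{\infty} g^{p-1}(-g')\,r^{Q-p}\,dr.
\]
The boundary term at infinity vanishes because $f$ has compact support, while the term at the origin vanishes because $Q-p>0$ forces $r^{Q-p}\to 0$. Here the radial-decreasing hypothesis $\R f\le 0$, i.e.\ $-g'\ge 0$, is what makes both factors in the last integral non-negative.

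Next I apply the reverse Hölder inequality of Theorem \ref{Hol} on the half-line, with $p\in(0,1)$ and $p'=\frac{p}{p-1}<0$, to the factorisation
\[
g^{p-1}(-g')\,r^{Q-p}=\underbrace{\big[(-g')\,r^{(Q-1)/p}\big]}_{=:F}\cdot\underbrace{\big[g^{p-1}\,r^{(Q-p)-(Q-1)/p}\big]}_{=:G},
\]
chosen so that $F^{p}=(-g')^{p} r^{Q-1}$ and $G^{p'}=g^{p} r^{Q-1-p}$; the exponent bookkeeping that verifies these two identities is the only real computation, and it is exactly the reverse of the splitting used in the direct proof. With $F,G\ge 0$, inequality \eqref{Holin} yields
\[
\int_{0}^{\infty} g^{p-1}(-g')\,r^{Q-p}\,dr\ge \left(\int_{0}^{\infty}(-g')^{p} r^{Q-1}\,dr\right)^{\frac1p}\left(\int_{0}^{\infty} g^{p} r^{Q-1-p}\,dr\right)^{\frac1{p'}}=A^{1/p}J^{1/p'}.
\]
Combined with the identity from the previous step this gives $J\ge \frac{p}{Q-p}A^{1/p}J^{1/p'}$; dividing by $J^{1/p'}$ and using $1-\frac{1}{p'}=\frac1p$ produces $J^{1/p}\ge\frac{p}{Q-p}A^{1/p}$, hence $J\ge\left(\frac{p}{Q-p}\right)^{p}A$. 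Integrating over $\S$ and returning to the norms via \eqref{EQ:polar} recovers \eqref{revhar}.

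The main obstacle is simply keeping every inequality pointing the right way: each step that is an upper bound in the proof of Theorem \ref{dirHardy} must become a lower bound here, and the sole device effecting this reversal is Theorem \ref{Hol}, whose hypotheses ($F\in L^{p}$ and $0<\int G^{p'}<\infty$) translate into $A<\infty$ and $0<J<\infty$. These may be assumed without loss of generality: if $J=+\infty$ then \eqref{revhar} is trivially true, and a direction with $J=0$ forces $g\equiv 0$ and contributes nothing on either side. A small but essential point, worth flagging in the write-up, is that the boundary term at the origin is annihilated by $Q-p>0$ rather than by any vanishing of $f$ near $0$, so the argument needs only compact support at infinity together with the radial monotonicity.
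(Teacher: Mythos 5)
Your proof is correct and follows essentially the same route as the paper: polar decomposition, integration by parts in the radial variable, and the reverse H\"older inequality of Theorem \ref{Hol}, with the same exponent splitting. The only cosmetic difference is that you apply reverse H\"older ray-by-ray on the half-line and then integrate over $\S$, whereas the paper applies it once globally on $\G$ to the pair $\R_{1}f$ and $f^{p-1}/|x|^{p-1}$; both yield the identical estimate.
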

\begin{proof}
Let us denote $\R_{1}=-\R$, so that we have $\R_{1}f>0$. By using polar decomposition \eqref{EQ:polar}, integration by parts and reverse H\"{o}lder's inequality, we obtain
\begin{equation}
\begin{split}
\int_{\G}\frac{f^{p}(x)}{|x|^{p}}dx&=\int_{0}^{\infty}\int_{\S}\frac{f^{p}(ry)}{r^{p}}r^{Q-1}drd\sigma(y)\\&
=-\frac{p}{Q-p}\int_{\G}\frac{f^{p-1}(x)}{|x|^{p-1}}\R f(x)dx\\&
=\frac{p}{Q-p}\int_{\G}\frac{f^{p-1}(x)}{|x|^{p-1}}\R_{1} f(x)dx\\&
\geq\frac{p}{Q-p}\left\|\frac{f}{|x|}\right\|^{p-1}_{L^{p}(\G)}\|\R_{1}f\|_{L^{p}(\G)}.
\end{split}
\end{equation}
This gives
\begin{equation}
\left\|\frac{f}{|x|}\right\|_{L^{p}(\G)}\geq\frac{p}{Q-p}\|\R_{1}f\|_{L^{p}(\G)},
\end{equation}
implying \eqref{revhar}.
\end{proof}
Let us define by $\mathbb{E}=|x|\R$ the Euler operator.   Then we have the reverse $L^{p}$-Sobolev inequality, the reverse to Theorem \ref{dirSob}.
\begin{thm}[Reverse $L^{p}$-Sobolev inequality]
Let $\G$ be a homogeneous Lie group with homogeneous dimension $Q\geq1$. Assume that $p\in(0,1)$. Then for any non-negative, real-valued and radially decreasing function $f\in C^{\infty}_{0}(\G\setminus\{0\})$,  we have
\begin{equation}\label{sobrev}
\left\|f\right\|_{L^{p}(\G)}\geq \frac{p}{Q}\|\mathbb{E}f\|_{L^{p}(\G)}.
\end{equation}
\end{thm}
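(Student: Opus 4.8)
The plan is to follow the template of the reverse Hardy inequality proof almost verbatim, replacing the radial derivative $\R$ by the Euler operator $\mathbb{E}=|x|\R$ and adjusting the integration by parts accordingly. Since $f$ is radially decreasing we have $\R f<0$, hence $\mathbb{E}f=|x|\R f<0$; I set $\mathbb{E}_{1}:=-\mathbb{E}$ so that $\mathbb{E}_{1}f>0$, which is precisely what makes the reverse H\"older inequality applicable.

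First I would apply the polar decomposition \eqref{EQ:polar} to write $\int_{\G}f^{p}(x)dx=\int_{0}^{\infty}\int_{\S}f^{p}(ry)r^{Q-1}d\sigma(y)dr$, and then integrate by parts in the radial variable $r$. The key difference from the Hardy case is the absence of the weight $|x|^{-p}$: integrating $r^{Q-1}$ produces the antiderivative $r^{Q}/Q$ (rather than $r^{Q-p}/(Q-p)$), which is exactly what yields the constant $p/Q$ in the final inequality. The boundary terms vanish because $f\in C_{0}^{\infty}(\G\setminus\{0\})$, so $f^{p}(ry)r^{Q}$ tends to $0$ both as $r\to0^{+}$ and as $r\to\infty$. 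Using $\frac{d}{dr}f^{p}(ry)=pf^{p-1}(ry)\R f(ry)$ together with $r\cdot r^{Q-1}=r^{Q}$, this converts back, via \eqref{EQ:polar}, into the identity
\[
\int_{\G}f^{p}(x)dx=-\frac{p}{Q}\int_{\G}f^{p-1}(x)\mathbb{E}f(x)dx=\frac{p}{Q}\int_{\G}f^{p-1}(x)\mathbb{E}_{1}f(x)dx.
\]

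Next I would apply the reverse H\"older inequality (Theorem \ref{Hol}) with exponent $p\in(0,1)$ and conjugate $p'=\frac{p}{p-1}<0$ to the product $f^{p-1}\cdot\mathbb{E}_{1}f$. Taking the first factor to be $\mathbb{E}_{1}f$ (whose $L^{p}$-norm reproduces $\|\mathbb{E}f\|_{L^{p}(\G)}$) and the second to be $f^{p-1}$ (so that $(f^{p-1})^{p'}=f^{p}$ and its contribution is $\|f\|_{L^{p}(\G)}^{p-1}$), I obtain $\int_{\G}f^{p-1}\mathbb{E}_{1}f\,dx\geq\|f\|_{L^{p}(\G)}^{p-1}\|\mathbb{E}f\|_{L^{p}(\G)}$. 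Combining this with the identity above gives $\|f\|_{L^{p}(\G)}^{p}\geq\frac{p}{Q}\|f\|_{L^{p}(\G)}^{p-1}\|\mathbb{E}f\|_{L^{p}(\G)}$, and dividing through by $\|f\|_{L^{p}(\G)}^{p-1}$ yields \eqref{sobrev}.

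The computation is essentially routine once the template is in place; the only points requiring care are the vanishing of the boundary terms in the integration by parts (guaranteed by the compact support away from the origin) and the verification of the hypotheses of Theorem \ref{Hol}, namely $f\in L^{p}(\G)$ and $0<\int_{\G}(\mathbb{E}_{1}f)^{p}\,dx<+\infty$, which ensure the reverse H\"older step is legitimate. I do not anticipate a genuine obstacle here, since this is the exact analogue of the reverse Hardy argument with $Q-p$ replaced by $Q$.
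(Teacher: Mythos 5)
Your proposal is correct and follows essentially the same route as the paper's own proof: polar decomposition, integration by parts in the radial variable with antiderivative $r^{Q}/Q$, the sign flip $\mathbb{E}_{1}=-\mathbb{E}$ to make the reverse H\"older inequality applicable, and division by $\|f\|_{L^{p}(\G)}^{p-1}$. Your additional remarks on the vanishing boundary terms and the hypotheses of Theorem \ref{Hol} are points the paper leaves implicit, but the argument is the same.
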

\begin{proof}
Let us denote $\mathbb{E}_{1}=|x|\R_{1}$, so that $\mathbb{E}_{1}f>0.$ By using polar decomposition \eqref{EQ:polar}, integration by parts and reverse H\"{o}lder's inequality, we obtain
\begin{equation}
\begin{split}
\int_{\G}f^{p}(x)dx&=\int_{0}^{\infty}\int_{\S}f^{p}(ry)r^{Q-1}drd\sigma(y)\\&
=-\frac{p}{Q}\int_{\G}f^{p-1}(x)|x|\R f(x)dx\\&
=\frac{p}{Q}\int_{\G}f^{p-1}(x)|x|\R_{1} f(x)dx\\&
=\frac{p}{Q}\int_{\G}f^{p-1}(x)\mathbb{E}_{1} f(x)dx\\&
\geq\frac{p}{Q}\left\|f\right\|^{p-1}_{L^{p}(\G)}\|\mathbb{E}_{1} f\|_{L^{p}(\G)}.
\end{split}
\end{equation}
This gives
\begin{equation}
\left\|f\right\|_{L^{p}(\G)}\geq\frac{p}{Q}\|\mathbb{E}_{1}f\|_{L^{p}(\G)},
\end{equation}
implying \eqref{sobrev}.
\end{proof}
Let us give the reverse $L^{p}$-Caffarelli-Kohn-Nirenberg inequality on $\G$, that is, the reverse to Theorem \ref{LpCKN}.
\begin{thm}[Reverse $L^{p}$-Caffarelli-Kohn-Nirenberg inequality]
Let $\G$ be a homogeneous Lie group with homogeneous dimension $Q\geq1$. Assume that $p\in(0,1)$. Then for any nonnegative, real-valued and radially decreasing function $f\in C^{\infty}_{0}(\G\setminus\{0\})$, we have
\begin{equation}\label{CKN}
\left\|\frac{f}{|x|^{\frac{\gamma}{p}}}\right\|^{p}_{L^{p}(\G)}\geq \frac{p}{Q-\gamma}\left\|\frac{\R f}{|x|^{\alpha}}\right\|_{L^{p}(\G)}\left\|\frac{f}{|x|^{\frac{\beta}{p-1}}}\right\|^{p-1}_{L^{p}(\G)},
\end{equation}
for all $\alpha,\beta\in \mathbb{R}$ and $\gamma=\alpha+\beta+1$, such that $Q> \gamma$.
\end{thm}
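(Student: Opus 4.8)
The plan is to imitate the proofs of the reverse Hardy and reverse $L^{p}$-Sobolev inequalities given just above: reduce the left-hand side of \eqref{CKN} to a purely radial integral via the polarisation formula \eqref{EQ:polar}, integrate by parts in the radial variable to produce a factor of $\R f$, and then close the estimate with the reverse H\"older inequality (Theorem \ref{Hol}). Exactly as there, I would set $\R_{1}=-\R$, which is positive because $f$ is radially decreasing, so that every integrand stays non-negative and Theorem \ref{Hol} is applicable.

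First I would expand, using \eqref{EQ:polar},
$$\left\|\frac{f}{|x|^{\gamma/p}}\right\|^{p}_{L^{p}(\G)}=\int_{\G}\frac{f^{p}(x)}{|x|^{\gamma}}\,dx=\int_{0}^{\infty}\int_{\S}f^{p}(ry)\,r^{Q-1-\gamma}\,d\sigma(y)\,dr.$$
Writing $r^{Q-1-\gamma}=\frac{1}{Q-\gamma}\frac{d}{dr}r^{Q-\gamma}$ and integrating by parts in $r$, the boundary terms vanish because $f\in C^{\infty}_{0}(\G\setminus\{0\})$ vanishes near both $0$ and $\infty$, while $\frac{d}{dr}f^{p}(ry)=p\,f^{p-1}(ry)\R f(ry)$. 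Reassembling $r^{Q-\gamma}=r^{1-\gamma}r^{Q-1}$, running \eqref{EQ:polar} backwards, and using $1-\gamma=-\alpha-\beta$, this yields
$$\left\|\frac{f}{|x|^{\gamma/p}}\right\|^{p}_{L^{p}(\G)}=\frac{p}{Q-\gamma}\int_{\G}\frac{f^{p-1}(x)\,\R_{1}f(x)}{|x|^{\alpha+\beta}}\,dx.$$
Here the hypothesis $Q>\gamma$ is precisely what makes the constant $\frac{p}{Q-\gamma}$ positive, which is essential for preserving the direction of the inequality in the next step.

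Next I would split the integrand as $\frac{\R_{1}f}{|x|^{\alpha}}\cdot\frac{f^{p-1}}{|x|^{\beta}}$ and apply the reverse H\"older inequality \eqref{Holin} with exponents $p\in(0,1)$ and $p'=\frac{p}{p-1}<0$, obtaining the lower bound
$$\int_{\G}\frac{\R_{1}f}{|x|^{\alpha}}\cdot\frac{f^{p-1}}{|x|^{\beta}}\,dx\geq\left\|\frac{\R_{1}f}{|x|^{\alpha}}\right\|_{L^{p}(\G)}\left(\int_{\G}\frac{f^{(p-1)p'}}{|x|^{\beta p'}}\,dx\right)^{1/p'}.$$
The only bookkeeping to check is that $(p-1)p'=p$, $\beta p'=\frac{\beta p}{p-1}$, and $\frac{1}{p'}=\frac{p-1}{p}$, so the second factor is exactly $\left\|\frac{f}{|x|^{\beta/(p-1)}}\right\|^{p-1}_{L^{p}(\G)}$; together with $|\R_{1}f|=|\R f|$, multiplying through by $\frac{p}{Q-\gamma}$ reproduces \eqref{CKN}.

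The computation is routine once the signs are tracked, and I expect the only point requiring genuine care — the \emph{obstacle} — to be the interplay of the three sign constraints: $p\in(0,1)$ (so $p'<0$ and reverse H\"older reverses the usual inequality), the sign of $Q-\gamma$ (positive by hypothesis), and $\R_{1}f>0$ (ensured by radial monotonicity). One must verify that these align so that, after multiplying the reverse-H\"older lower bound by the positive constant $\frac{p}{Q-\gamma}$, the full chain of inequalities genuinely points in the $\geq$ direction asserted in \eqref{CKN}. Choosing $\R_{1}=-\R$ at the outset is what keeps every integrand non-negative and makes Theorem \ref{Hol} applicable throughout.
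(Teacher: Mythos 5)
Your proposal is correct and follows essentially the same route as the paper's own proof: polar decomposition via \eqref{EQ:polar}, radial integration by parts producing the factor $\frac{p}{Q-\gamma}$, the substitution $\R_{1}=-\R$ to keep the integrand non-negative, and the reverse H\"older inequality with the split $\frac{\R_{1}f}{|x|^{\alpha}}\cdot\frac{f^{p-1}}{|x|^{\beta}}$ and the exponent bookkeeping $(p-1)p'=p$. Your explicit attention to the sign of $Q-\gamma$ and the vanishing of boundary terms matches (indeed slightly elaborates) what the paper does implicitly.
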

\begin{proof}
By using polar decomposition \eqref{EQ:polar}, integration by parts and reverse H\"{o}lder's inequality, we obtain
\begin{equation}
\begin{split}
\int_{\G}\frac{f^{p}(x)}{|x|^{\gamma}}dx&=\int_{0}^{\infty}\int_{\S}\frac{f^{p}(ry)}{r^{\gamma}}r^{Q-1}drd\sigma(y)\\&
=-\frac{p}{Q-\gamma}\int_{\G}\frac{f^{p-1}(x)}{|x|^{\gamma-1}}\R f(x)dx\\&
=\frac{p}{Q-\gamma}\int_{\G}\frac{f^{p-1}(x)}{|x|^{\alpha+\beta}}\R_{1} f(x)dx\\&
=\frac{p}{Q-\gamma}\int_{\G}\frac{f^{p-1}(x)}{|x|^{\beta}}\frac{\R_{1} f(x)}{|x|^{\alpha}}dx\\&
\geq\frac{p}{Q-\gamma}\left\|\frac{f}{|x|^{\frac{\beta p'}{p}}}\right\|^{p-1}_{L^{p}(\G)}\left\|\frac{\R_{1}f}{|x|^{\alpha}}\right\|_{L^{p}(\G)}\\&
=\frac{p}{Q-\gamma}\left\|\frac{f}{|x|^{\frac{\beta \frac{p}{p-1}}{p}}}\right\|^{p-1}_{L^{p}(\G)}\left\|\frac{\R_{1}f}{|x|^{\alpha}}\right\|_{L^{p}(\G)}\\&
=\frac{p}{Q-\gamma}\left\|\frac{f}{|x|^{\frac{\beta}{p-1}}}\right\|^{p-1}_{L^{p}(\G)}\left\|\frac{\R_{1}f}{|x|^{\alpha}}\right\|_{L^{p}(\G)}.
\end{split}
\end{equation}
This gives
\begin{equation}
\left\|\frac{f}{|x|^{\frac{\gamma}{p}}}\right\|^{p}_{L^{p}(\G)}\geq \frac{p}{Q-\gamma}\left\|\frac{\R_{1}f}{|x|^{\alpha}}\right\|_{L^{p}(\G)}\left\|\frac{f}{|x|^{\frac{\beta}{p-1}}}\right\|^{p-1}_{L^{p}(\G)},
\end{equation}
which implies \eqref{CKN}.
\end{proof}
\begin{rem} In \eqref{CKN}, if we take $\gamma=p$ and $\alpha=0$, then we have the reverse Hardy inequality. Also, if we take $\gamma=0$ and $\beta=0$, then we have the reverse $L^{p}$-Sobolev inequality.
\end{rem}

\end{document}